\newcommand*{\diff}[2]{\frac{\partial #1}{\partial #2}} 
\newsavebox{\mybox}\newsavebox{\mysim}
\newcommand{\distas}[1]{%
  \savebox{\mybox}{\hbox{\kern3pt$\scriptstyle#1$\kern3pt}}%
  \savebox{\mysim}{\hbox{$\sim$}}%
  \mathbin{\overset{#1}{\kern\z@\resizebox{\wd\mybox}{\ht\mysim}{$\sim$}}}%
}
\DeclareMathOperator*{\argmin}{arg\,min}
\newcommand{\toinf}{\rightarrow \infty}
\newcommand{\tozero}{\rightarrow 0}
\newcommand{\sumjinf}{\sum_{j=1}^\infty}
\newcommand*{\inv}{^{-1}}
\newcommand{\bbR}{\mathbb{R}}
\newcommand{\bbN}{\mathbb{N}}
\newcommand{\Var}{\,\text{Var}}
\newcommand{\Cov}{\,\text{Cov}}
\newcommand{\one}{\,\mathbf{1}}
\renewcommand{\exp}{\,\text{exp}}
\newcommand{\Supp}{\text{Supp}}
\newcommand{\hbeta}{\hat{\beta}}
\newcommand{\hmu}{\hat{\mu}}
\newcommand{\hx}{\hat{x}}
\newcommand{\hf}{\hat{f}}
\newcommand{\hg}{\hat{g}}
\newcommand{\hE}{\hat{E}}
\newcommand{\hG}{\hat{G}}
\newcommand{\hQ}{\hat{Q}}
\newcommand{\hxi}{\hat{\xi}}
\newcommand{\hpi}{\hat{\pi}}
\newcommand{\hlambda}{\hat{\lambda}}
\newcommand{\hphi}{\hat{\phi}}
\newcommand{\barf}{\bar{f}}
\newcommand{\barg}{\bar{g}}
\newcommand{\tf}{\tilde{f}}
\newcommand{\tQ}{\tilde{Q}}
\newcommand{\tphi}{\tilde{\phi}}
\newcommand{\tx}{\tilde{x}}
\newcommand{\tX}{\tilde{X}}
\newcommand{\txi}{\tilde{\xi}}
\newcommand*{\cT}{\mathcal{T}}
\newcommand*{\cS}{\mathcal{S}}
\newcommand*{\cX}{\mathcal{X}}
\newcommand*{\cU}{\mathcal{U}}
\theoremstyle{compact.definition}
\newtheorem{Theorem}{Theorem}
\newtheorem{Lemma}{Lemma}
\theoremstyle{compact.remark}
\newcommand{\onetoinf}{_{j = 1}^\infty}
\newcommand{\onetoJ}{_{j = 1}^J}
\newcommand*{\oneovernk}{\frac{1}{n_k}}
\newcommand*{\sumink}{\sum_{i=1}^{n_k}}
\newcommand{\CE}{common eigenfunction}
\newcommand{\half}{\frac{1}{2}}
\newcommand*{\lbarM}{\underline{M}}
\newcommand*{\barM}{\overline{M}}
\newcommand{\Xk}{X^{[k]}}
\newcommand{\Xktwo}{\diff{^2}{t^2}\Xk}
\newcommand{\tXik}{\tX^{[k]}_i}
\newcommand{\tXki}{\tXik}
\newcommand{\Xik}{X^{[k]}_i}
\newcommand{\Xki}{\Xik}
\newcommand{\tGk}{\tilde{G}_k}
\newcommand{\tG}{\tilde{G}}
\newcommand{\tDelta}{\tilde{\Delta}}
\newcommand{\bea}{\begin{eqnarray*}}
\newcommand{\eea}{\end{eqnarray*}}
\newcommand{\be}{\begin{eqnarray}}
\newcommand{\ee}{\end{eqnarray}}
\newcommand{\ed}{\end{document}}
\newcommand{\no}{\noindent}
\newcommand{\btab}{\begin{tabular}}
\newcommand{\etab}{\end{tabular}}
\newcommand{\bc}{\begin{center}}
\newcommand{\ec}{\end{center}}
\newcommand{\bi}{\begin{itemize}}
\newcommand{\ei}{\end{itemize}}
\newcommand{\bfi}{\begin{figure}}
\newcommand{\efi}{\end{figure}}
\newcommand{\ben}{\begin{enumerate}}
\newcommand{\een}{\end{enumerate}}
\newcommand{\bdes}{\begin{description}}
\newcommand{\edes}{\end{description}}
\newcommand{\bay}{\begin{array}}
\newcommand{\eay}{\end{array}}
\def\bco{\iffalse}
\def\ci{\cite}
\def\cp{\citep}
\newcommand{\blu}[1]{\textcolor{blue}{#1}}
\definecolor{grey}{rgb}{0.57, 0.64, 0.69}
\def\i1{_{i1}}
\def\cI{{\mathcal I}}
\def\s2{\sum_{k=1}^\infty\lambda_k\phi_k(t) \phi_k(t)}
\def\s1{\sum_{k=1}^\infty\lambda_k\phi^{(1)}_k(s) \phi_k(t)}
\def\pt{\phi_k(t)}
\def\ps{\phi_k(s)}
\def\pt1{\phi^{(1)}_k(t)}
\def\ps1{\phi^{(1)}_k(s)}
\def\singlespace{\def\baselinestretch{1}\@normalsize}
\newcommand{\tight}{\renewcommand{\baselinestretch}{1}\normalsize}
\newcommand{\double}{\renewcommand{\baselinestretch}{1.5}\normalsize}
\begin{document}

\thispagestyle{empty}  \bc {\bf \sc \Large Optimal Bayes Classifiers for Functional Data and Density Ratios} \vspace{0.25in}\ec

\vspace*{0.1in} \centerline{\today} \vspace*{0.1in}

\vspace*{0.1in} \centerline{Short title: Functional Bayes Classifiers}
\vspace*{0.1in}

\begin{center}

Xiongtao Dai\footnote{Corresponding author}\\

Department of Statistics \\ %
University of California, Davis \\ %
Davis, CA 95616 U.S.A. \\ %
Email: dai@ucdavis.edu

\vspace*{0.3in} 
Hans-Georg M\"uller\footnote{Research supported by NSF grants DMS-1228369 and DMS-1407852}

Department of Statistics \\ %
University of California, Davis \\ %
Davis, CA 95616 U.S.A. \\ %
Email: hgmueller@ucdavis.edu \\

\vspace*{0.3in} Fang Yao\footnote{Research supported by the Natural Sciences and Engineering Research Council of Canada}\\
Department of Statistics\\
University of Toronto\\
100 St. George Street\\
Toronto, ON M5S 3G3\\
Canada\\
Email: fyao@utstat.toronto.edu\\
\end{center}\vspace{.5in}

\thispagestyle{empty}
\newpage \pagenumbering{arabic}

\vspace{0.05in} \thispagestyle{empty} \bc {\bf ABSTRACT} \ec

\no Bayes classifiers for functional data pose a challenge.  This is because probability density functions do not exist for functional data. As a consequence, the classical Bayes classifier using density quotients needs to be modified. We propose to use  density ratios of projections on a sequence of eigenfunctions that are common to the groups to be classified. The density ratios can then be factored into density ratios of individual functional principal components whence the classification problem is reduced to a sequence of nonparametric one-dimensional density estimates. This is an extension to functional data of some of the very earliest nonparametric Bayes classifiers that were based on simple density ratios in the one-dimensional case. By means of the factorization of the density quotients the curse of dimensionality that would otherwise severely affect Bayes classifiers for functional data can be avoided. We demonstrate that in the case of Gaussian functional data, the proposed functional Bayes classifier reduces to a functional version of the classical quadratic discriminant.  A study of the asymptotic behavior of the proposed classifiers in the large sample limit shows that under certain conditions  the misclassification rate converges to zero, a phenomenon that has been referred to as  ``perfect classification''.
The proposed classifiers also perform favorably in finite sample applications, as we demonstrate in comparisons with other functional classifiers in simulations and various data applications, including wine spectral data, functional magnetic resonance imaging (fMRI) data for attention deficit hyperactivity disorder (ADHD) patients, and yeast gene expression data. 

\vspace{0.1in}

\no {\it Key words and phrases:} \quad Common functional principal component, density estimation, functional classification, Gaussian process, quadratic discriminant analysis.\\

\no {\bf AMS Subject Classification:} 62M20, 62C10, 62H30 

\thispagestyle{empty} \vfill

\section{Introduction}  
In classification of functional data,  predictors may be viewed as  random trajectories and 
responses are indicators for two or more categories. The goal of functional classification is to assign a group label to each predictor function, i.e., to predict the group label for each of the observed random curves. Functional classification is a rich topic with broad applications in many areas of commerce, medicine and the sciences, and with important applications in pattern recognition, chemometrics and genetics \cp{song:08, zhu:10, zhu:12, fran:12, coff:14}.   Within the functional data analysis (FDA)  framework \cp{rams:05}, each observation is viewed as a smooth random curve on a compact domain. Functional classification also has been recently extended to the related  task of classifying longitudinal data 
\cp{wu:13, wang:14} and has close connections with functional clustering \cp{chio:08}. 

There is a rich body of papers on functional classification, using a vast array of methods,  for example distance-based classifiers \cp{ferr:03:3,alon:12}, $k$-nearest neighbor classifiers  \cp{biau:05, cero:06, biau:10}, Bayesian methods \cp{wang:07,yang:14}, logistic regression \cp{arak:09}, or partial least squares \cp{pred:05,pred:07}, 

It is well known that  Bayes classifiers based on density quotients are  optimal classifiers in the sense of minimizing misclassification rates \citep[see for example][]{bick:00}. In the one-dimensional case, this provided one of the core 
motivations for nonparametric density estimation \cp{fix:51,rose:56,parz:62,wegm:72} but for higher-dimensional cases 
an unrestricted nonparametric approach is subject to the curse of dimensionality \cp{scot:15} and this leads to very slow rates of convergence for estimating the nonparametric densities for dimensions larger than three or four. This  renders the resulting
classifiers practically worthless. The situation is exacerbated in the case of functional predictors, which are infinite-dimensional and therefore associated with a most severe curse of dimensionality. This curse of dimensionality is caused by the small ball problem in function space, meaning the expected number of functions falling into balls with small radius is  vanishingly small, 
which implies that densities do not even exist in most cases \cp{li:99,hall:10}. 

As a consequence, in order to define a
Bayes classifier through density quotients with reasonably good estimation properties, one needs to invoke sensible restrictions. These could be  for example restrictions of the class of predictor processes, an approach that has been  adopted  in  \cite{hall:12:2},  who consider two Gaussian populations with equal covariance using a functional linear discriminant,  which is analogous to the linear discriminant. This is the Bayes classifier in the analogous multivariate case. \cite{gale:15}  propose a functional quadratic method for discriminating two general Gaussian populations, making use of a suitably defined  Mahalanobis distance for functional data.

In contrast to these approaches, 
we aim here at the construction of a  nonparametric Bayes classifier for functional data. To achieve  this, we project the observations onto an orthonormal basis that is common to the two populations, and construct density ratios through products of the density ratios of the projection scores. The densities themselves are nonparametrically estimated, which is feasible as they are one-dimensional. 
We also provide an alternative implementation of the proposed nonparametric functional Bayes classifier through nonparametric regression. This second implementation of functional Bayes classifiers sometimes works even better than the direct approach through density quotients in finite sample applications.

We obtain conditions for the asymptotic equivalence of the proposed functional nonparametric  Bayes classifiers  and their estimated versions,  and also for asymptotic perfect classification when using our classifiers. The term ``perfect classification" was introduced  in \cite{hall:12:2} to denote conditions where the  misclassification rate converges to zero, as the sample size increases, and we use it in the same sense here. 
Perfect classification in the Gaussian case  requires that there are certain differences between the mean or covariance functions, while such differences are not a prerequisite for the nonparametric approach to succeed. In the special case of Gaussian functional predictors, the proposed classifiers simplify to those considered in \ci{hall:13:1}. Additionally, we extend our theoretical results to cover the practically important situation where the functional data are not fully observed, but rather are observed as noisy measurements that are made on a dense grid.
 
In \autoref{s:model}, we introduce the proposed Bayes classifiers and their estimates are discussed in  \autoref{s:est}. We do not require knowledge about the type of underlying processes that generate the functional data. One difficulty for the theoretical analysis that will be addressed in \autoref{s:theory} is that the 
projection scores themselves are not available but rather have to be estimated from the data. Practical implementation, simulation results and applications to various functional data examples are discussed in  \autoref{s:imp}, \autoref{s:sim} and \autoref{s:data}, respectively.   We 
demonstrate that the finite sample performance of the proposed classifiers in simulation studies  and also for three data sets is excellent, specifically in comparison to the  functional linear \citep{hall:12:2}, functional quadratic \citep{gale:15}, and functional logistic regression methods \citep{jame:02,mull:05:1, mull:06:3,esca:07}.

\section{Functional Bayes Classifiers} \label{s:model}
We consider the situation where the observed data come from a common distribution $(X, Y)$, where $X$ is a fully observed square integrable random function in $L^2(\cT)$, $\cT$ is a compact interval, and $Y \in \{0, 1\}$ is a group label. Assume $X$ is distributed as $X^{[k]}$ if $X$ is from population $\Pi_k$, $k = 0, 1$, that is, $X^{[k]}$ is the conditional distribution of $X$ given $Y = k$. Also let $\pi_k = P(Y=k)$ be the prior probability that an observation falls into $\Pi_k$.  Our aim is to infer the group label $Y$ of a new observation $X$. 

The optimal Bayes classification rule that minimizes misclassification error classifies an observation $X=x$ to $\Pi_1$ if
\begin{equation}
\label{eq:ratio1}
Q(x) = \frac{P(Y=1|X=x)}{P(Y=0|X=x)} > 1,
\end{equation} where we denote  realized functional observations by $x$  and random predictor functions by $X$. If  the conditional densities of the functional observations $X$ exist, where conditioning is on  the respective group label, we denote them  as $g_0$ and $g_1$ when conditioning on group 0 or 1.  Then the Bayes theorem implies 
\begin{equation}
\label{eq:ratio2}
Q(x) = \frac{\pi_1 g_1(x)}{\pi_0 g_0(x)}.
\end{equation}
However, the densities for functional data do not usually exist \citep[see][]{hall:10}. To overcome this difficulty, we consider a sequence of approximations to the functional observations, where the number of components is increasing, and then use the density ratios~\eqref{eq:ratio2}. 

  Our approach is to first
represent $x$ and the random $X$ by projecting onto an orthogonal basis $\{\psi_j \}\onetoinf$. This leads to  the projection scores $\{x_j \}\onetoinf$ and $\{\xi_j \}\onetoinf$, where $x_j = \int_{\cT} x(t)\psi_j(t) \d t$ and  $\xi_j = \int_{\cT} X(t)\psi_j(t) \d t$, $j = 1,\,2,\,\dots$. As noted in \cite{hall:01:3}, when comparing the conditional probabilities, it is sensible to project the data from both groups onto the same basis. Our goal is to approximate the conditional probabilities $P(Y=k|X=x)$ by $P(Y=k|\text{the first $J$ scores of $x$})$, where $J\toinf$. Then by Bayes theorem,
\begin{align}
Q(x) & \approx 
	\frac{ P(Y=1|\text{the first $J$ scores of $x$})}{P(Y=0|\text{the first $J$ scores of $x$})} \nonumber\\
& = 
	\frac{\pi_1 f_1(x_{1}, \dots,x_{J})}{\pi_0 f_0(x_{1},  \dots,x_{J}) }, \label{eq:q}
\end{align}
where $f_1$ and $f_0$ are the conditional densities for the first $J$ random projection scores $\xi_{1}, \dots, \xi_{J}$. 

Estimating the joint density of $(\xi_{1}, \dots,\xi_{J})$ is impractical and subject to the curse of dimensionality when $J$ is large, so it is sensible to introduce reasonable conditions that simplify \eqref{eq:q}. A first simplification is to assume the auto-covariances of the stochastic processes that generate the observed data have the same ordered eigenfunctions for both populations. 
Specifically, write $G_k(s, t) = \Cov(X^{[k]}(s), X^{[k]}(t))$, and define the covariance operators of $G_k(s, t)$ as
\[
G_k: L^2(\mathcal{T}) \rightarrow L^2(\cT),\quad G_k(f) = \int_\cT G_k(s, t) f(s) \d s.
\]
Assuming $G_k(s, t)$ is continuous, by Mercer's theorem 
\begin{gather}
G_k(s, t) = \sum_{j=1}^\infty \lambda_{jk} \phi_{jk}(s) \phi_{jk}(t),
\end{gather}
where $\lambda_{1k}\ge \lambda_{2k} \ge \dots \ge 0$ are the eigenvalues of $G_k$ and $\phi_{jk}$ are the corresponding orthonormal eigenfunctions, $j=1, 2,\dots$, and $\sum_{j=1}^\infty \lambda_{jk} < \infty$, for $k = 0, 1$. The \CE{} condition then is  $\phi_{j0} = \phi_{j1} \eqqcolon \phi_{j}$, for $j = 1,\, 2,\, \dots$ \citep{flur:84, benk:09,boen:10,coff:11}. We note that this assumption can be weaken to that the two populations share the same set of eigenfunctions, not necessarily with the same order. In this case, we reorder the eigenfunctions and eigenvalues such that $\phi_{j0} = \phi_{j1} = \phi_{j}$ holds, but $\lambda_{jk}$ are not necessarily in descending order for $k=0, 1$.

Choosing the projection directions $\{\psi_j \}_{j=1}^\infty$ as the shared eigenfunctions $\{\phi_j \}_{j=1}^\infty$, one has $\Cov(\xi_{j}, \xi_{l}) = 0$ if $j \ne l$.  We note that in general the score $\xi_j$ is not the functional principal component (FPC) $\int_{\cT} (X(t) - \mu_k(t)) \phi_j(t) \d t$. 

A second  simplification is that  we assume that  the projection scores are independent under both populations. Then the densities in  (\ref{eq:q}) factor and the criterion function can be rewritten by taking logarithm as 
\begin{equation} \label{eq:fr_trunc}
Q_J(x) = \log\left( \frac{\pi_1}{\pi_0} \right) +  \sum_{j=1}^J\log\left( \frac{f_{j1}(x_j)}{f_{j0}(x_j)} \right),
\end{equation}
where $f_{jk}$ is the density of the $j$th score under $\Pi_k$. We classify into $\Pi_1$ if and only if $Q_J(x) > 0.$  Due to the zero divided by zero problem, \eqref{eq:fr_trunc} is defined only on a set $\cX$ with $P(X \in \cX) = 1$. Our theoretical arguments in the following are restricted to this set. For the asymptotic analysis we will consider the case where $J = J(n) \rightarrow \infty$ as $n \rightarrow \infty$.

When predictor processes $X$ are Gaussian for $k = 0, 1$, the projection scores $\xi_j$ are independent and one may substitute Gaussian densities for the densities $f_{jk}$ in \eqref{eq:fr_trunc}. 
Define the $j$th projection of the mean function $\mu_k(t)= E(X^{[k]}(t))$ of $\Pi_k$ as 
\[
\mu_{jk} = E(\xi_j|Y = k) = \int_{\cT} \mu_k(t) \phi_j(t) \d t.
\]
Then in this special case of our more general nonparametric approach, one obtains the simplified version 
\begin{align}
Q_J^G(x) & = \log\left(\frac{\pi_1}{\pi_0}\right) + \sum_{j=1}^J \log\left( \frac{(2\pi \lambda_{j1})^{-1/2} \exp(-\frac{(x_j - \mu_{j1})^2}{2\lambda_{j1}}) }{ (2\pi \lambda_{j0})^{-1/2} \exp(-\frac{(x_j - \mu_{j0})^2}{2\lambda_{j0}}) } \right) \nonumber\\
& = \log\left(\frac{\pi_1}{\pi_0}\right) + \half\sum_{j=1}^J \left[(\log\lambda_{j0} - \log\lambda_{j1}) - \left( \frac{1}{\lambda_{j1}}(x_j - \mu_{j1})^2 - \frac{1}{\lambda_{j0}}(x_j -  \mu_{j0})^2 \right) \right].  \label{eq:bayesGauss}
\end{align} Here
$Q_J^{G}(X)$ either converges to a random variable almost surely if  $\sum_{j \ge 1}  (\mu_{j1} - \mu_{j0})^2 / \lambda_{j0} < \infty$ and $\sum_{j \ge 1}(\lambda_{j0} / \lambda_{j1} - 1)^2 < \infty$, or otherwise diverges to $\infty$ or $-\infty$ almost surely, as $J \toinf$. More details about the properties of $Q_J^{G}(X)$ can be found in Lemma~\autoref{lem:gen}  in appendix~\ref{app:thmGen}. It is apparent that \eqref{eq:bayesGauss} is the quadratic discriminant rule using the first $J$ projection scores, which is the Bayes rule for multivariate Gaussian data with different covariance structures.  If further $\lambda_{j0} = \lambda_{j1},\, j=1,2,\ldots$ then one has equal covariances and  \eqref{eq:bayesGauss} reduces to the functional linear discriminant \citep{hall:12:2}.

Because our method does not assume Gaussianity and allows for densities $f_{jk}$ of general form  in \eqref{eq:fr_trunc}, we may expect better performance than Gaussian-based functional classifiers when the population is non-Gaussian. In practice the projection score densities are estimated nonparametrically by kernel density estimation \citep{silv:86} or in the alternative nonparametric regression version by kernel regression \citep{nada:64, wats:64}, as described in \autoref{s:est}. 


\section{Estimation} \label{s:est}
We first estimate the common eigenfunctions by pooling data from the both groups to obtain a joint covariance estimate.  Since we assume that the eigenfunctions are the same, while eigenvalues and thus  covariances may  differ, we can write $G_k(s, t) = \Cov(X^{[k]}(s), X^{[k]}(t)) = \sum_{j=1}^\infty \lambda_{jk} \phi_{j}(s) \phi_{j}(t)$ where the $\phi_j$ are the \CE{s}. We define the joint covariance operator  $G = \pi_0 G_0 + \pi_1 G_1$. Then $\phi_j$ is also the $j$th eigenfunction of $G$ with eigenvalue  $\lambda_j = \pi_0\lambda_{j0} + \pi_1\lambda_{j1}$. 

Assume we have $n = n_0 + n_1$ functional predictors $X^{[0]}_1, \dots, X^{[0]}_{n_0}$ and $X^{[1]}_1, \dots, X^{[1]}_{n_1}$ from $\Pi_0$ and $\Pi_1$, respectively. 
In practice, the assumption that functional data for which one wants to construct classifiers are fully observed is often unrealistic. Rather, one has available  dense observations that have been taken on a regular or irregular design, possibly with some missing observations, where the measurements are contaminated with independent measurement errors that have zero mean and finite variance. In this case, we first smooth the discrete observations to obtain a smooth estimate for each trajectory,  using local linear kernel smoothers, and then regard the smoothed trajectory as a fully observed functional predictor. In our theoretical analysis, we justify this approach and show that we obtain the same asymptotic classification results as if we had fully observed the true underlying random functions.  Details about the pre-smoothing and the resulting classification will be given right before Theorem~\autoref{thm:presmooth} in \autoref{s:theory}, where this theorem provides  theoretical justifications for the pre-smoothing approach by establishing asymptotic equivalence to the case of fully observed functions, under suitable regularity conditions. 

We estimate the mean and covariance functions by $\hmu_k(t)$ and $\hG_k(s, t)$, the sample mean and sample covariance function under group $k$, respectively, and estimate $\pi_k$ by $\hpi_k = n_k / n$. Setting $\hG(s,t) = \hpi_0\hG_0(s, t) + \hpi_1\hG_1(s,t)$ and denoting the $j$th eigenvalue-eigenfunction pair of $\hG$ by $(\hlambda_j, \hphi_j)$, we obtain and represent the projections for a generic functional observation by $\hxi_j = \int_\cT X(t) \hphi_j(t) \d t$, $j = 1, \dots, J$. 
We denote the $j$th estimated projection score of the $i$th observation in group $k$ by $\hxi_{ijk}$. The eigenvalues $\lambda_{jk}$ are estimated by $\hlambda_{jk} = \int_\cT \int_\cT \hG_k(s, t) \hphi_j(s) \hphi_j(t) \d s \d t$, which is motivated by $\lambda_{jk} = \int_\cT \int_\cT G_k(s, t) \phi_j(s) \phi_j(t) \d s \d t$,
and the pooled eigenvalues by $\hlambda_j = \hpi_0\hlambda_{j0} + \hpi_1\hlambda_{j1}$.  We estimate the $j$th projection scores $\mu_{jk}$ of $\mu_k(t)$ by $\hmu_{jk} = \int_\cT \hmu_k(t)\hphi_j(t) \d t$. We observe that $\mu_k$, $G_k$, $\phi_j$, and $\lambda_{jk}$ will be consistently estimated, with details in appendix~\ref{app}. 

We then proceed to obtain nonparametric estimates of the  densities for each of the projection scores. For this, we use kernel density estimates, applied to the sample projection scores from group $k$. The kernel density estimate \citep{silv:86} for the $j$th component in group $k$ is given by 
\begin{equation} \label{eq:npd}
\hat{f}_{jk}(u) = \frac{1}{n_kh_{jk}} \sum_{i=1}^{n_k} K\Big(\frac{u-\hxi_{ijk}}{h_{jk}}\Big),
\end{equation}
where $u \in \bbR$, and $h_{jk} = h\sqrt{\lambda_{jk}}$ is the bandwidth adapted to the variance of the projection score. The bandwidth multiplier $h$ is the same for all projection density estimates and will be specified in \autoref{s:theory} and \autoref{s:imp}. These estimates then lead to estimated density ratios $\hat{f}_{j1}(u)/\hat{f}_{j0}(u)$.

An alternative approach for estimating the density ratios is via nonparametric regression. This is motivated by the Bayes theorem, as follows, 
\be \label{eq:npr}
\frac{f_{j1}(u)}{f_{j0}(u)} &=& \frac{P(Y=1|\xi_j=u) p_j(u) / P(Y=1)}{P(Y=0|\xi_j=u) p_j(u) / P(Y=0)}  \nonumber \\
 &=& \frac{P(Y=1|\xi_j=u) / \pi_1}{P(Y=0|\xi_j=u) / \pi_0} = \frac{\pi_0 P(Y=1|\xi_j=u)}{\pi_1(1 - P(Y=1|\xi_j=u))},
\ee
where $p_j(\cdot)$ is the marginal density of the $j$th projection. This reduces the construction of nonparametric Bayes classifiers to a sequence of nonparametric regressions $E(Y|\xi_j=u)=P(Y=1|\xi_j=u)$. These  again can be implemented by  a kernel method  \citep{nada:64, wats:64}, smoothing the scatter plots of the pooled estimated scores $\hxi_{ijk}$ of group $k = 0, 1$, which leads to the nonparametric estimators 
\[
\hE(Y|\xi_{j} = u) = \frac{\sum_{k=0}^1\sum_{i=1}^{n_k} k K(\frac{u-\hxi_{ijk}}{h_{j}}) }{\sum_{k=0}^1\sum_{i=1}^{n_k} K(\frac{u-\hxi_{ijk}}{h_{j}}) },
\]
where $h_j = h \sqrt{(\lambda_{j0} + \lambda_{j1}) / 2}$ is the bandwidth. This results in estimates  $\hat{E}(Y|\xi_j=u) = \hat{P}(Y=1|\xi_j=u)$ that we plug-in at the right hand side of  \eqref{eq:npr}, which then yields an alternative estimate of the density ratio, replacing the two kernel density estimates $\hat{f}_{j1}(u), \, \hat{f}_{j0}(u)$ by just one nonparametric regression estimate
$\hat{E}(Y|\xi_j=u)$. 

Writing $\hx_j = \int_{\cT} x(t) \hphi_j(t) \d t$, the estimated criterion function based on kernel density estimate is thus 
\begin{equation} \label{eq:Qest}
\hQ_J(x) = \log\frac{\hpi_1}{\hpi_0} + \sum_{j\le J} \log \frac{\hf_{j1}(\hx_j)}{\hf_{j0}(\hx_j)}, 
\end{equation}
while the estimated criterion function based on kernel regression is 
\begin{equation} \label{eq:QestR}
\hQ_J^R(x) = \log\frac{\hpi_1}{\hpi_0} + \sum_{j\le J} \log \frac{\hpi_0 \hE(Y|\xi_{j} = u) }{\hpi_1 [1 - \hE(Y|\xi_{j} = u)]}.
\end{equation}

\section{Theoretical Results} \label{s:theory}
In this section we present the asymptotic equivalence of the estimated version of the Bayes classifiers to the true one. For the first three main results in Theorems \autoref{thm:nonpar}-\autoref{thm:nonparTrue} we assume fully observed functional predictors, and then in Theorem~\autoref{thm:presmooth} we show that these results can be extended to the practically more relevant case where predictor functions are not fully observed, but are only indirectly observed through densely spaced noisy measurements. Following \cite{hall:12:2}, we use the term ``perfect classification'' to mean the misclassification rate approaches zero as more projection scores are used, and proceed to give conditions for the proposed nonparametric  Bayes classifiers to achieve perfect classification. All proofs are in the appendix~\ref{app}.

For theoretical considerations only we assume the following simplifications that can be easily bypassed.   Without loss of generality we denote the mean functions of $\Pi_0$ and $\Pi_1$ as 0 and $\mu(t)$, respectively, since we can subtract the mean function of $\Pi_0$ from all samples, whereupon  $\mu(t)$ becomes the difference in the mean functions, and $\mu_j = \int_{\cT} \mu(t)\phi_j(t) \d t$ stands for the $j$th projection of the mean function. We also assume $\pi_0 = \pi_1$ and $n_0 = n_1$. We use a common multiplier $h$ for all bandwidths $h_{jk} = h\sqrt{\lambda_{jk}}$ in the kernel density estimates and $h_j = h \sqrt{(\lambda_{j0} + \lambda_{j1}) / 2}$ in the kernel regression estimates, for all $j \ge 1$ and $k = 0, 1$.

We need the following assumptions:
\begin{itemize}
\item[(A1)] The covariance operators $G_k(s, t)$ under $\Pi_0$ and $\Pi_1$ have common eigenfunctions;
\item[(A2)] For all $j \ge 1$, the projection scores $\xi_j$ onto the common eigenfunctions $\phi_j$ are independent under $\Pi_0$ and $\Pi_1$, and their densities exist.
\end{itemize}

The common eigenfunction assumption (A1) means the covariance functions $G_k(s, t)$ under $\Pi_0$ and $\Pi_1$ can be decomposed as 
\[
G_k(s, t) = \Cov(X^{[k]}(s), X^{[k]}(t)) = \sum_{j=1}^\infty \lambda_{jk} \phi_{j}(s) \phi_{j}(t),
\]
where the $\phi_j$ are the \CE{s} and $\lambda_{jk}$ are the associated eigenvalues. This means that the major modes of variation are assumed to be the same for both populations, while the variances in the \CE{} directions might change. In practice, the \CE{} assumption allows for meaningful comparisons of the modes of variation between groups, as it makes it possible to reduce such comparisons to comparisons of the functional principal components, as suggested by \cite{coff:11}; for our analysis,  the \CE{s} are convenient projection directions, and are assumed to be such that the projection scores become independent, as is for example the case if predictor processes satisfy the more restrictive  Gaussian assumption.   The \CE{} assumption is weaker than the shared covariance assumption as it allows for  different eigenvalues between groups and thus for different covariance operators across groups.


Theorem~\ref{thm:nonpar} below  states $\hQ_J(x)$ as in \eqref{eq:Qest} is asymptotically equivalent to $Q_J(x)$ as in \eqref{eq:fr_trunc}, for all $J$.  We define the kernel density estimator using the true projection scores $\xi_{ijk}=\int_{\cT} X^{[k]}_i(t)\phi_j(t) \d t$ as
\[
\barf_{jk}(u) = \frac{1}{n_kh_{jk}} \sum_{i=1}^{n_k} K\Big(\frac{u-\xi_{ijk}}{h_{jk}}\Big).
\]
Let $g_{jk}$ be the density functions of the (standardized) FPCs $\xi_j / \sqrt{\lambda_{j0}}$ when $k=0$ and that of $(\xi_j - \mu_j) / \sqrt{\lambda_{j1}}$ when $k=1$, $\hg_{jk}$ be the kernel density estimates of $g_{jk}$ using the estimated FPCs, and $\barg_{jk}$ be the kernel density estimates using the true FPCs, analogous to $\hf_{jk}$ and $\barf_{jk}$. 
\cite{hall:10} provide the uniform convergence rate of $\hg_{jk}$ to $\barg_{jk}$ on a compact domain, with  detailed proof available in \cite{hall:11}, and our derivation utilizes this result.  

We make the following assumptions (B1)--(B5) for $k=0, 1$, in which (B1)--(B4) parallel  assumptions (3.6)--(3.9) in \cite{hall:10}, namely
\begin{itemize}
\item[(B1)] For all large $C > 0$ and some $\delta> 0$, $\sup_{t \in \cT} E_{\Pi_k}\{|X(t)|^C\} < \infty$ and $\sup_{s,t\in \cT: s \ne t} E_{\Pi_k}[\{|s - t|^{-\delta} |X(s) - X(t)|\}^C ] < \infty$; 
\item[(B2)] For each integer $r \ge 1$, $\lambda_{jk}^{-r} E_{\Pi_k}\{\int_{\cT}(X(t) - E_{\Pi_k}X(t)) \phi_j(t) \d t \}^{2r}$ is bounded uniformly in $j$;
\item[(B3)] The eigenvalues $\{\lambda_j\}_{j=1}^\infty$ are all different, and so are the eigenvalues in each of the sequences $\{\lambda_{jk}\}_{j=1}^\infty$, for $k = 0, 1$; 
\item[(B4)] The densities $g_{jk}$ are bounded and have a bounded derivative; the kernel $K$ is a symmetric, compactly supported density function with two bounded derivatives; for some $\delta > 0, h= h(n) = O(n^{-\delta})$ and $n^{1-\delta}h^3$ is bounded away from zero as $n\toinf$. 
\item[(B5)] The densities $g_{jk}$ are bounded away from zero on any compact interval within their respective support, i.e. for all compact intervals $\cI \subset \Supp(g_{jk})$, $\inf_{x_j \in \cI} g_{jk}(x_j) > 0$ for $k = 0, 1$ and $j \ge 1$. 
\end{itemize}

Note that (B1) is a H\"older continuity condition for the process $X(t)$, which is a slightly modified version of a condition in \cite{hall:06:1} and \cite{hall:09}, and that (B2) is satisfied if the standardized FPCs have moments of all orders that are uniformly bounded. In particular, Gaussian processes satisfy (B2) since the standardized FPCs identically follow the standard normal distribution. Recall that the $\lambda_j$ in (B3) are the eigenvalues of the pooled covariance operator, and (B3) is a standard condition \citep{bosq:00}.   (B4) and  (B5) are needed for constructing consistent estimates for the density quotients.
\begin{Theorem} \label{thm:nonpar}
Assuming (A1), (A2), and (B1)--(B5), for any $\epsilon > 0$ there exist a set $S$ with $P(S) > 1 - \epsilon$ and a sequence $J = J(n, \epsilon) \rightarrow \infty$ such that $P(S \cap \{\one\{\hQ_J(X) \ge 0 \} = Y\}) - P(S \cap \{\one\{Q_J(X) \ge 0 \} = Y\}) \tozero$ as $n \toinf$.
\end{Theorem}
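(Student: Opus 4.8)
The plan is to bound the target difference by the probability that the two classification decisions disagree, and then to show this disagreement probability vanishes for a suitably slow sequence $J=J(n,\epsilon)\toinf$. Since the events $A=\{\one\{\hQ_J(X)\ge 0\}=Y\}$ and $B=\{\one\{Q_J(X)\ge 0\}=Y\}$ coincide whenever $\hQ_J(X)$ and $Q_J(X)$ share a sign, we have
\[
\left| P(S\cap A) - P(S\cap B)\right| \le P\big(S\cap\{\one\{\hQ_J(X)\ge 0\}\ne \one\{Q_J(X)\ge 0\}\}\big).
\]
Because a sign disagreement forces $|\hQ_J(X)-Q_J(X)|\ge |Q_J(X)|$, for any $\eta>0$ the right side is at most $P(|\hQ_J(X)-Q_J(X)|>\eta) + P(S\cap\{|Q_J(X)|\le \eta\})$. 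Hence it suffices to (i) make the estimation error $|\hQ_J-Q_J|$ uniformly small with high probability, and (ii) control the mass that $Q_J(X)$ places near the decision boundary $0$ on $S$.

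First I would construct $S$. Working with the standardized scores $\xi_j/\sqrt{\lambda_{j0}}$ and $(\xi_j-\mu_j)/\sqrt{\lambda_{j1}}$, assumption (B2) bounds all their moments uniformly in $j$, so choosing slowly growing constants $M_j$ (for instance a small power of $j$) and letting $S$ be the event that every standardized score up to order $J$ lies in $[-M_j,M_j]$, a union bound together with (B2) yields $P(S)>1-\epsilon$. On $S$ the relevant scores lie in compact intervals on which the density lower bound (B5) holds and on which the uniform convergence results below apply. Next I would decompose $\hQ_J-Q_J$ into the prior-ratio error $\log(\hpi_1/\hpi_0)-\log(\pi_1/\pi_0)=O_P(n^{-1/2})$ and the sum over $j\le J$ of per-term log-ratio errors, splitting each of the latter into (a) the density-estimation error of $\hf_{jk}$ relative to $f_{jk}$ and (b) the score-evaluation error from replacing $x_j=\int_\cT x\phi_j$ by $\hx_j=\int_\cT x\hphi_j$.

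Passing to the standardized densities via $h_{jk}=h\sqrt{\lambda_{jk}}$, part (a) reduces to controlling $\hg_{jk}-g_{jk}$, which I would route through the intermediate $\barg_{jk}$: the uniform bound $\sup_{\cI}|\hg_{jk}-\barg_{jk}|\tozero$ on compacts is precisely the result of \cite{hall:10} (with proof in \cite{hall:11}), which absorbs the eigenfunction-estimation error, while $\sup_{\cI}|\barg_{jk}-g_{jk}|\tozero$ is the standard kernel rate under (B4). The lower bound (B5) then converts these into bounds on the log-ratio, and part (b) is controlled via the bounded density derivatives (B4) together with $|\hx_j-x_j|\le \|x\|\,\|\hphi_j-\phi_j\|$ and the consistency of $\hphi_j$ and $\hlambda_{jk}$. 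The main obstacle is the accumulation over $j\le J$ as $J\toinf$: the per-term error does not vanish uniformly in $j$, since the eigenvalue gaps governing $\|\hphi_j-\phi_j\|$ shrink, the density lower bounds on the growing intervals $[-M_j,M_j]$ deteriorate, and the scores spread out. The sequence $J=J(n,\epsilon)$ must therefore be chosen to grow slowly enough that the accumulated per-term error is $o_P(1)$, which forces $P(|\hQ_J-Q_J|>\eta)\tozero$ for each fixed $\eta$.

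Finally, for the boundary term I would use that, by the independence in (A2), $Q_J(X)$ is a sum of independent increments and hence, as in Lemma~\autoref{lem:gen}, along $J(n)\toinf$ either diverges or converges almost surely to a limit with no atom at $0$; consequently $\limsup_n P(S\cap\{|Q_J(X)|\le\eta\})\tozero$ as $\eta\tozero$. Combining the two contributions and sending first $\ntoinf$ and then $\eta\tozero$ gives $\left| P(S\cap A)-P(S\cap B)\right|\tozero$, as claimed.
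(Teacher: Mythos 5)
Your core argument is essentially the paper's own: the paper's Lemma~\ref{lem:hff} performs exactly your split (a)+(b), routing $\hg_{jk}-g_{jk}$ through the intermediate $\barg_{jk}$ via the uniform result of \cite{hall:10} plus the standard kernel rate, and absorbing the eigenfunction/eigenvalue estimation error; the paper's proof then builds $S$ from the per-$(j,k)$ error events (with probabilities $1-2^{-(j+2)}\epsilon$) intersected with a norm ball $\cS(c)$ and support events, defines $J$ by the slow-growth condition $\sum_{j\le J',\,k}M_{jk}/d_{jk}\le a_n$ with $d_{jk}$ the (B5) density lower bounds, and converts density error into log-ratio error by a Taylor expansion. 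Your truncation of standardized scores via (B2) simply plays the role of the paper's ball $\{\|X\|\le c\}$ (which gives $|x_j|\le c$ for all $j$ at once); both are workable ways to get a set of probability $1-\epsilon$ on which (B5) and the uniform convergence can be applied.

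The genuine gap is in your final step, the boundary term. You claim that, ``as in Lemma~\ref{lem:gen},'' $Q_J(X)$ along $J(n)\toinf$ either diverges or converges almost surely to a limit with no atom at $0$, so that $\limsup_n P(S\cap\{|Q_J(X)|\le\eta\})$ vanishes as $\eta\tozero$. Lemma~\ref{lem:gen} is proved only for Gaussian populations and for $Q_J^G$; more importantly, its dichotomy says nothing about atoms, and the atomlessness claim is false under the hypotheses of the theorem: if $\Pi_0=\Pi_1$ (all $f_{j0}=f_{j1}$, which is allowed by (A1), (A2), (B1)--(B5)), then with $\pi_0=\pi_1$ one has $Q_J(X)\equiv 0$, a point mass exactly at the decision boundary, and the sign-disagreement probability does not vanish, since $\hQ_J(X)$ hovers around $0$ with essentially random sign. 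The theorem still holds in that case, but by cancellation rather than sign agreement: conditioning on the training sample and using that $X$ and $Y$ are then independent, both $P(S\cap\{\one\{\hQ_J(X)\ge 0\}=Y\})$ and $P(S\cap\{\one\{Q_J(X)\ge 0\}=Y\})$ equal $P(S)/2$. So your route, taken literally, cannot close. To be fair, the paper's own proof glosses over the same point: it concludes from $|\hQ_J(X)-Q_J(X)|=o(1)$ on $S$ that the two criteria ``have the same sign,'' which tacitly presumes exactly the boundary control you tried to supply. Your write-up has the virtue of making that hidden step explicit; it just does not justify it, and no argument of the form ``the limit has no atom at $0$'' can succeed without assumptions beyond those of the theorem.
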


Theorem~\autoref{thm:nonpar} provides the asymptotic equivalence of the estimated classifier based on the kernel density estimates  \eqref{eq:npd} and the true Bayes classifier. This  implies that it is sufficient to  investigate the asymptotics of the true Bayes classifier to establish asymptotic perfect classification. The following theorem establishes an analogous result about the equivalence of the estimated classifier based on kernel regression and the true Bayes classifier.

\begin{Theorem} \label{thm:nonparR}
Assuming (A1), (A2), and (B1)--(B5), for any $\epsilon > 0$ there exist a set $S$ with $P(S) > 1 - \epsilon$ and a sequence $J = J(n,\epsilon) \rightarrow \infty$ such that $P(S \cap \{\one\{\hQ_J^R(X) \ge 0 \} = Y\}) - P(S \cap \{\one\{Q_J(X) \ge 0 \} = Y\}) \tozero$ as $n \toinf$.
\end{Theorem}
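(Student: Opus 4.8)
The plan is to reduce Theorem~\ref{thm:nonparR} to the already-established Theorem~\ref{thm:nonpar} by recognizing that the kernel-regression criterion $\hQ_J^R$ is, after elementary algebra, a density-ratio criterion of exactly the same form as $\hQ_J$, differing only in the choice of bandwidth. First I would substitute the explicit Nadaraya--Watson estimator into \eqref{eq:QestR}. Since $Y\in\{0,1\}$, the numerator of $\hE(Y\mid\xi_j=u)$ retains only the group-$1$ terms, so that, writing $\hf^*_{jk}$ for the kernel density estimate of the group-$k$ scores formed with the pooled bandwidth $h_j=h\sqrt{(\lambda_{j0}+\lambda_{j1})/2}$, one obtains
\begin{equation*}
\hE(Y\mid\xi_j=u)=\frac{n_1\hf^*_{j1}(u)}{n_0\hf^*_{j0}(u)+n_1\hf^*_{j1}(u)}.
\end{equation*}
Under the simplifying assumptions $n_0=n_1$ (so that $\hpi_0=\hpi_1$), the prior factors in \eqref{eq:QestR} cancel and the regression-based log-ratio collapses to $\log\{\hf^*_{j1}(u)/\hf^*_{j0}(u)\}$. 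Hence
\begin{equation*}
\hQ_J^R(x)=\sum_{j\le J}\log\frac{\hf^*_{j1}(\hx_j)}{\hf^*_{j0}(\hx_j)},
\end{equation*}
which coincides with \eqref{eq:Qest} except that the group-specific bandwidths $h_{jk}=h\sqrt{\lambda_{jk}}$ have been replaced by the common $h_j$.

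This reduction makes the remainder of the argument a near-copy of the proof of Theorem~\ref{thm:nonpar}. The only substantive point to check is that replacing $h_{jk}$ by $h_j$ does not affect the uniform convergence of the density estimates that drives that proof. Because $h_j=h\sqrt{(\lambda_{j0}+\lambda_{j1})/2}$ and $h_{jk}=h\sqrt{\lambda_{jk}}$ are both proportional to the multiplier $h$, with proportionality constant lying between $\sqrt{\min_k\lambda_{jk}}$ and $\sqrt{\max_k\lambda_{jk}}$, the pooled bandwidth is of the same order as each group-specific bandwidth, and assumption (B4) on $h=h(n)$ is inherited unchanged. I would therefore invoke the uniform convergence rate of \cite{hall:10,hall:11} for $\hf^*_{jk}$ toward $f_{jk}$ on compact intervals, exactly as in Theorem~\ref{thm:nonpar}, to control $\log\{\hf^*_{j1}/\hf^*_{j0}\}-\log\{f_{j1}/f_{j0}\}$ uniformly on the compact region defining the high-probability set $S$, where (B5) keeps the denominators bounded away from zero.

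I would then sum these per-component errors over $j\le J$ and select the same slowly growing sequence $J=J(n,\epsilon)\to\infty$ as in Theorem~\ref{thm:nonpar}, so that $\hQ_J^R(X)-Q_J(X)\tozero$ on $S$ and hence the two classification rules $\one\{\hQ_J^R(X)\ge 0\}$ and $\one\{Q_J(X)\ge 0\}$ agree on $S$ with probability tending to one; this yields the stated convergence of the difference of agreement probabilities to zero.

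The main obstacle I anticipate is the bandwidth mismatch after standardization. Whereas $h_{jk}$ becomes exactly $h$ in the standardized coordinate used by the Hall--Pham bounds (since the standardized FPC has unit variance), the pooled $h_j$ becomes $h\sqrt{(\lambda_{j0}+\lambda_{j1})/(2\lambda_{jk})}$, a multiple of $h$ governed by the between-group eigenvalue ratio $\lambda_{j,1-k}/\lambda_{jk}$. This factor is automatically bounded below by $1/\sqrt2$, but its boundedness above for each fixed $j$ is what guarantees the per-component convergence rate is preserved; the delicate part is ensuring that the accumulation of these factors across $j\le J$ is absorbed by choosing $J(n,\epsilon)$ to grow sufficiently slowly, which is the one place where the argument is not a literal transcription of the proof of Theorem~\ref{thm:nonpar}.
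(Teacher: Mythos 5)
Your proposal follows essentially the same route as the paper: substituting the Nadaraya--Watson form gives $\hE(Y|\xi_j=u) = \hpi_1\hf_{j1}(u)/\{\hpi_1\hf_{j1}(u)+\hpi_0\hf_{j0}(u)\}$ with $\hf_{jk}$ the kernel density estimates built with the pooled bandwidth $h_j$, the prior factors cancel algebraically, and $\hQ_J^R$ collapses to the density-ratio criterion, so the result follows from Theorem~\ref{thm:nonpar}. The paper's own proof is in fact terser than yours---it simply observes that $\hQ_J^R$ ``has the same form as $\hQ_J$''---so your explicit discussion of the bandwidth mismatch ($h_j$ versus $h_{jk}$, both proportional to $h$, with the resulting per-component constants absorbed by the slowly growing choice of $J(n,\epsilon)$ exactly as the constants $M_{jk}/d_{jk}$ are absorbed in the proof of Theorem~\ref{thm:nonpar}) only adds rigor to the step the paper leaves implicit.
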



Our next result shows that the proposed nonparametric Bayes classifiers achieve perfect classification under certain conditions. Intuitively, the following theorem describes when the individual pieces of evidence provided by each of the independent projection scores add up strong enough for perfect classification. Let $m_j = \mu_{j} / \sqrt{\lambda_{j0}}$ and $r_j = \lambda_{j0} / \lambda_{j1}$. We impose the following conditions on the standardized FPCs: 

\begin{itemize}
\item[(C1)] The densities $g_{j0}(\cdot)$ and $g_{j1}(\cdot)$ are uniformly bounded for all $j\ge 1$.
\item[(C2)] The first four moments of $\xi_j / \sqrt{\lambda_{j0}}$ under $\Pi_0$ and those of  $(\xi_j - \mu_j) / \sqrt{\lambda_{j1}}$ under $\Pi_1$ are uniformly bounded for all $j \ge 1$. 
\end{itemize}


\begin{Theorem} \label{thm:nonparTrue}
Assuming (A1), (A2), and (C1)--(C2), the Bayes classifier $\one\{Q_J(x) \ge 0\}$ achieves perfect classification if $\sum_{j\ge 1} (r_j - 1)^2 = \infty$ or $\sum_{j\ge 1} m_j^2 = \infty$,  as $J \toinf$. 
\end{Theorem}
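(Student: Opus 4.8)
The plan is to exploit the optimality of the Bayes rule to reduce the problem to constructing, for each $J$, an explicit and tractable classifier based on the first $J$ scores whose misclassification probability tends to zero. Under (A2) the scores $\xi_1,\dots,\xi_J$ are independent within each population, so their joint density factorizes and $Q_J$ in \eqref{eq:fr_trunc} is exactly the log--likelihood ratio for the first $J$ scores; with $\pi_0=\pi_1$ the rule $\one\{Q_J(x)\ge 0\}$ is therefore the Bayes rule among all classifiers measurable with respect to $(\xi_1,\dots,\xi_J)$, and its risk is nonincreasing in $J$. Hence any classifier of the form $\one\{T_J\ge\tau_J\}$ with $T_J$ a function of the first $J$ scores has risk at least that of $\one\{Q_J\ge0\}$, and it suffices to find such a $T_J$ whose error vanishes: the Bayes risk is then squeezed to zero and perfect classification follows. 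Throughout I would standardize by the first population, writing $Z_j=\xi_j/\sqrt{\lambda_{j0}}$, so that under $\Pi_0$ each $Z_j$ has mean $0$ and variance $1$, while under $\Pi_1$ it has mean $m_j$ and variance $1/r_j$; by (C2) the first four moments of the $Z_j$ are uniformly bounded in $j$, which is what controls the variances of the statistics below through Chebyshev's inequality.

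\textbf{Case A} ($\sum_j m_j^2=\infty$). I would use the linear statistic $T_J=\sum_{j\le J} m_j Z_j$. With $S_J=\sum_{j\le J}m_j^2\to\infty$, it has mean $0$ and variance $S_J$ under $\Pi_0$, and mean $S_J$ and variance $\sum_{j\le J} m_j^2/r_j$ under $\Pi_1$. Thresholding at $\tau_J=S_J/2$, Chebyshev bounds the $\Pi_0$ error by $4/S_J\to0$, while the $\Pi_1$ error is at most $4\sum_{j\le J}m_j^2/r_j\,/\,S_J^2$. The latter vanishes provided the ratios do not degenerate, which is automatic on the regime $\sum_j(r_j-1)^2<\infty$, since then $r_j\to1$ and $\sum m_j^2/r_j\sim S_J=o(S_J^2)$. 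If instead $\sum_j(r_j-1)^2=\infty$ the conclusion is supplied by Case B, so in Case A one may assume $\sum_j(r_j-1)^2<\infty$ without loss.

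\textbf{Case B} ($\sum_j(r_j-1)^2=\infty$; by Case A one may assume $\sum_j m_j^2<\infty$). Writing $a_j=1/r_j-1$, an elementary estimate shows $\sum_j(r_j-1)^2=\infty$ implies $\sum_j a_j^2=\infty$. Here I would use a quadratic statistic $T_J=\sum_{j\le J} w_j(Z_j^2-1)$. Since $E(Z_j^2-1)$ equals $0$ under $\Pi_0$ and $m_j^2+a_j$ under $\Pi_1$, the separation between the population means of $T_J$ is $\sum_{j\le J} w_j(m_j^2+a_j)$; taking $w_j=a_j$ this equals $\sum a_j^2+\sum a_j m_j^2$, whose first term diverges and whose second is of lower order by Cauchy--Schwarz together with $\sum m_j^4<\infty$. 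The fourth--moment bound (C2) makes $\Var_{\Pi_0}(Z_j^2)$ uniformly bounded, so $\Var_{\Pi_0}(T_J)=O(\sum a_j^2)=o\big((\sum a_j^2)^2\big)$, and Chebyshev again drives the $\Pi_0$ error to zero.

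The hard part will be the variance control under $\Pi_1$ in Case B in the regime of extreme eigenvalue ratios, $r_j\to0$ (equivalently $\lambda_{j1}\gg\lambda_{j0}$). There $Z_j$ is standardized by the small scale $\lambda_{j0}$, so $\Var_{\Pi_1}(Z_j^2)$ grows like $(1/r_j)^2=(1+a_j)^2$ and is not tamed by (C2); with $w_j=a_j$ the bound $\Var_{\Pi_1}(T_J)\lesssim\sum a_j^2(1+a_j)^2$ can fail to be $o\big((\sum a_j^2)^2\big)$ when the $a_j$ grow quickly, and the cross term $\sum a_j m_j^2$ must be prevented from canceling the variance signal. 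The remedy is to replace $Z_j^2$ on such components by a \emph{bounded} even transform $\varphi(Z_j)$ (for instance a compactly modified version of $Z_j^2$, or a tail indicator $\one\{|Z_j|>t\}$), so that each summand has uniformly bounded variance; the price is that the per--component separation $E_{\Pi_1}\varphi(Z_j)-E_{\Pi_0}\varphi(Z_j)$ must be bounded below by a multiple of $(r_j-1)^2\wedge1$. This is precisely where the uniform density bound (C1) enters: boundedness of $g_{j0},g_{j1}$ prevents mass concentration and converts a genuine variance gap into a quantifiable separation of a bounded functional. Establishing this per--component lower bound uniformly in $j$, and then verifying $\Var(T_J)=o(\mathrm{signal}^2)$ across all regimes of $r_j$, is the technical core; the Bayes--optimality reduction and the mean--driven Case A are comparatively routine. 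Equivalently, one may route the entire argument through the Hellinger affinities $\rho_j=\int\sqrt{f_{j0}f_{j1}}$, since independence gives the product bound for the Bayes risk $R_J\le\tfrac12\prod_{j\le J}\rho_j$, with $\prod\rho_j\to0$ iff $\sum(1-\rho_j)=\infty$; the same obstacle then reappears as the need to lower bound $1-\rho_j$ by $m_j^2\wedge1$ or $(r_j-1)^2\wedge1$ using (C1)--(C2).
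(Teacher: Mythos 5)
Your overall strategy --- reduce via Bayes optimality to an explicit classifier built from the first $J$ scores, then kill its error with Chebyshev under (C1)--(C2) --- is exactly the paper's, and most of your execution is sound: Case A with the linear statistic $\sum_{j\le J}m_jZ_j$ is complete (and arguably cleaner than the paper, which runs a quadratic statistic through all cases), and your Case B works whenever the ratios $r_j$ stay bounded away from $0$, including subsequences $r_{j_l}\toinf$ where $a_j=1/r_j-1$ stays bounded. The problem is the regime you yourself flag as ``the technical core'': a subsequence $r_{j_l}\tozero$. There $\Var_{\Pi_1}(Z_j^2)$ can be of order $r_j^{-2}=(1+a_j)^2$, and with $w_j=a_j$ the Chebyshev bound $\Var_{\Pi_1}(T_J)/(\sum_{j \le J} a_j^2)^2$ genuinely need not vanish (take $\lambda_{j1}=2^j\lambda_{j0}$ along a sparse set of indices: then $\sum_j(r_j-1)^2=\infty$ entirely through components with $r_j\tozero$, and signal squared and variance are of the same order). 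Your proposed remedy --- replace $Z_j^2$ by a bounded even transform and lower-bound the separation via (C1) --- is stated as something that ``must'' work, but you never construct the transform nor prove the separation bound, so as submitted the argument does not cover this case. That is a genuine gap, not a stylistic omission.

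The missing idea, which is how the paper disposes of this regime in one stroke, is that when ratios are extreme you should not sum at all: a \emph{single} component suffices. If $r_{j_l}\tozero$ or $r_{j_l}\toinf$, the paper compares the Bayes rule with the one-dimensional classifier $\one\{S^G_{j_l}\ge 0\}$, where $S^G_j=\log r_j - r_j(\zeta_j-m_j)^2+\zeta_j^2$ is the Gaussian-form summand. Under one population the error is bounded using (C2) by a Markov/Chebyshev argument, since the relevant threshold $|\log r_{j_l}|\toinf$; under the other population the event reduces to a standardized score falling in an interval whose length tends to zero, and this probability vanishes by the uniform density bound (C1). No variance accumulates because nothing is summed. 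This is also the route by which your own sketch could be completed: your tail-indicator idea does work on a single extreme component, e.g.\ $P_{\Pi_1}(|Z_{j_l}|\le t)\le 2t\sqrt{r_{j_l}}\sup g_{j_l1}\tozero$ by (C1) while $P_{\Pi_0}(|Z_{j_l}|>t)\le t^{-2}$, but that computation (or the paper's case split into ``extreme subsequence'' versus ``ratios bounded away from $0$ and $\infty$'') still has to be written out for the proof to stand.
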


Note that in general the conditions for perfect classification in Theorem~\autoref{thm:nonparTrue} are not necessary. The general case that we study here has the following interesting feature. When $\Pi_1$ and $\Pi_0$ are non-Gaussian, perfect classification may occur even if the mean and covariance functions under the two groups are the same, because one has infinitely many projection scores to obtain information, each possibly having strong discrimination power due to the different shapes of distributions under different groups. 

Consider the following example. Let the projection scores $\xi_j$ be independent random variables with mean 0 and variance $\nu_j$ that follow normal distributions under $\Pi_1$ and Laplace distributions under $\Pi_0$. Then 
\begin{align}
Q_J(X) & = \sum\onetoJ\log \frac{\frac{1}{\sqrt{2\pi\nu_j}} \exp(- \frac{\xi_j^2}{2\nu_j} )}{\frac{1}{\sqrt{2\nu_j}} \exp(-\frac{|\xi_j|}{\sqrt{\nu_j / 2}} )} \nonumber \\
& = \sum\onetoJ \left(-\half\log\pi - \frac{\xi_j^2}{2\nu_j} + \sqrt{2}|\xi_j|/\sqrt{\nu_j}\right). \label{eq:Qexample}
\end{align}
Since centered normal and Laplace distributions form a scale family, we have that $\zeta_j \coloneqq \xi_j / \sqrt{\nu_j}$ have a common standard distribution $\zeta_{0k}$ under $\Pi_k$, irrespective of $j$. Denoting the summand of \eqref{eq:Qexample} by $S_j$, this implies $S_j =  -(\log\pi + \zeta_j^2)/2 + \sqrt{2}|\zeta_j|$ are independent and identically distributed. Note that $E_{\Pi_0}(S_1) = (-\log\pi + 1)/2 + 1 < 0$, $E_{\Pi_1}(S_1) = -(\log\pi + 1)/2 + 2/\sqrt{\pi} > 0$, and $S_1$ has finite variance under either population. So the misclassification error under $\Pi_0$ is 
\begin{align*}
P_{\Pi_0}(Q_J(X) > 0) & = P_{\Pi_0}\left(\sum\onetoJ S_j - E_{\Pi_0}[\sum\onetoJ S_j]> - E_{\Pi_0}[\sum\onetoJ S_j]\right) \nonumber \\
& \le \frac{\Var_{\Pi_0}(\sum\onetoJ S_j)}{[E_{\Pi_0}(\sum\onetoJ S_j)]^2} \\
& = \frac{J\Var_{\Pi_0}(S_1)}{J^2E_{\Pi_0}(S_1)^2} \tozero \quad \text{ as } J\toinf,
\end{align*}
where the inequality is due to Chebyshev's inequality and the last equality is due to $S_j$ are identically and independently distributed. Similarly, the misclassification error under $\Pi_1$ also goes to zero as $J \toinf$. Therefore perfect classification occurs under this non-Gaussian case where both the mean and covariance functions are the same. This provides a case where 
attempts 
at classification under Gaussian assumptions are doomed, as mean and covariance functions are the same between the groups.  

In practice we observe only discrete and noisy measurements 
\begin{equation*}
W_{ikl} = \Xki(t_{ikl}) + \varepsilon_{ikl}
\end{equation*}
for the $i$th functional predictors $\Xki$ in group $k$, for $l = 1, \dots, m_{ik}$, where $m_{ik}$ is the number of measurements per curve. We smooth the noisy measurements by local linear kernel smoothers and obtain $\tXki$, targeting the true predictor $\Xki$. More precisely, for each $t \in \cT$ we let $\tXik(t) = \hbeta_0$, where
\[
(\hbeta_0, \hbeta_1) = \argmin_{\beta_0, \beta_1} \sum_{l=1}^{m_{ik}} K_0(\frac{t - t_{ikl}}{w_{ik}}) [W_{ikl} - \beta_0 - \beta_1(t - t_{ikl})]^2,
\]
$K_0$ is the kernel, and $w_{ik}$ is the bandwidth  for pre-smoothing. 
We let $\bar\tX^{[k]}$ and $\tGk$ be the sample mean and covariance functions of the smoothed predictors in group $k$, and $\tG(s, t) = \pi_0\tG_0(s,t) + \pi_1\tG_1(s,t)$. Also, let $\tphi_{j}(t)$ be the $j$th eigenfunction of $\tG$ for $j=  1, 2, \dots$.
Denote $\txi_{jk} = \int_{\cT} \tXik (t) \tphi_j(t) \d t$ and $\tx_{j} = \int_{\cT} x(t) \tphi_{j}(t) \d t$ as the projection score for a random or fixed function onto $\tphi_j$. Then we use kernel density estimates
\begin{equation} \label{eq:}
\tf_{jk}(u) = \frac{1}{n_k h_{jk}} \sumink K(\frac{u-\txi_{ijk}}{h_{jk}})
\end{equation}
analogous to \eqref{eq:npd}. 

To obtain theoretical results under pre-smoothing, we make regularity assumptions (D1)--(D4), which parallel assumptions (B2)--(B4) in the supplementary material of \cite{kong:16}:

\begin{itemize}
\item[(D1)] For $k=0, 1$, $\Xk$ is twice continuously differentiable on $\cT$ with probability 1, and $\int_\cT E\{(\Xktwo(t)^2 \} \d t < \infty$
\item[(D2)] For $i=1, \dots, n$ and $k=0, 1$, $\{t_{ikl}: l = 1, \dots, m_{ik}\}$ are considered deterministic and ordered increasingly. There exist design densities $u_{ik}(t)$ which are uniformly smooth over $i$ satisfying $\int_{\cT} u_{ik}(t) \d t = 1$ and $0 < c_1 < \inf_i\{\inf_{t\in T} u_{ik}(t)\} < \sup_i\{\sup_{t\in T} u_{ik}(t) \} < c_2 < \infty$ that generate $t_{ikl}$ according to $t_{ikl} = U_{ik}^{-1}\{l/(m_{ik} + 1) \}$, where $U_{ik}\inv$ is the inverse of $G_{ik}(t) = \int_{-\infty}^{t} u_{ik}(s) \d s$. 
\item[(D3)] For each $k=0, 1$, there exist a common sequence of bandwidth $w$ such that $0 < c_1 < \inf_i w_{ik} / w < \sup_i w_{ik} / w < c_2 < \infty$, where $w_{ik}$ is the bandwidth for smoothing $\tXik$. The kernel function $K_0$ for local linear smoothing is twice continuously differentiable and compactly supported. 
\item[(D4)] Let $\delta_{ik} = \sup\{t_{ik,l+1} - t_{ikl}: l=0, \dots, m_{ik} \}$ and $m = m(n) = \inf_{i=1, \dots, n;\, k=0, 1} m_{ik}$. $\sup_{i, k} \delta_{ik} = O(m\inv)$, $w$ is of order $m^{-1/5}$, and $m h^5 \rightarrow \infty$, where $h$ is the common bandwidth multiplier in kernel density estimates.
\end{itemize}

Let $\tQ_J$ be the classifier using $J$ components analogous to $\hQ_J$ in \eqref{eq:Qest}, but with kernel density estimates $\tf_{jk}$ constructed with the pre-smoothed predictors. Under the above assumptions, we obtain an extension of Theorem~\autoref{thm:nonpar}.

\begin{Theorem} \label{thm:presmooth}
Assuming (A1), (A2), (B1)--(B5), and (D1)--(D4), for any $\epsilon > 0$ there exist a set $S$ with $P(S) > 1 - \epsilon$ and a sequence $J = J(n, \epsilon) \rightarrow \infty$ such that $P(S \cap \{\one\{\tQ_J(X) \ge 0 \} = Y\}) - P(S \cap \{\one\{Q_J(X) \ge 0 \} = Y\}) \tozero$ as $n \toinf$.
\end{Theorem}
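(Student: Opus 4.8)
The plan is to reduce Theorem~\ref{thm:presmooth} to Theorem~\ref{thm:nonpar} by a triangle-inequality argument. Since Theorem~\ref{thm:nonpar} already establishes the asymptotic equivalence of $\hQ_J$, built from the fully observed trajectories, and the true Bayes classifier $Q_J$, it suffices to prove the new equivalence between $\tQ_J$, built from the pre-smoothed trajectories, and $\hQ_J$. Concretely, I would show that on a set of probability exceeding $1-\epsilon$ the two criterion functions satisfy $\sup_{x\in\cX}|\tQ_J(x)-\hQ_J(x)| \tozero$ for a suitably slowly growing $J=J(n,\epsilon)$; combining this with Theorem~\ref{thm:nonpar} then transfers the classification-probability bound from $\hQ_J$ to $\tQ_J$ and yields the claim.

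The core is to propagate the pre-smoothing error through the construction in stages. First I would invoke local linear smoothing theory under (D1)--(D4), paralleling the supplementary bounds of \cite{kong:16}, to obtain a uniform (over $i$ and $k$) rate for $\|\tXki - \Xki\|$: the bias is of order $w^2$ by the twice-differentiability in (D1), the stochastic part is of order $(mw)^{-1/2}$, and the choice $w\asymp m^{-1/5}$ in (D4) balances the two. Second, because the sample covariance is quadratic in the trajectories, this trajectory rate propagates to $\|\tGk - \hG_k\|$ and hence to $\|\tG - \hG\|$, using the moment bounds in (B1) and (D1) to control the cross terms. Third, I would apply standard operator perturbation bounds (see \cite{bosq:00} and \cite{hall:06:1}) together with the distinct-eigenvalue condition (B3) to convert the covariance error into bounds on $\|\tphi_j - \hphi_j\|$ and $|\tlambda_j - \hlambda_j|$.

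With these in hand, the fourth stage controls the score perturbation by writing $\txi_{ijk} - \hxi_{ijk} = \int_\cT (\tXki(t) - \Xki(t))\tphi_j(t)\,\d t + \int_\cT \Xki(t)(\tphi_j(t) - \hphi_j(t))\,\d t$ and bounding each term by Cauchy--Schwarz using the trajectory and eigenfunction rates. The fifth stage transfers this to the density estimates: since $K$ has a bounded derivative by (B4), $\sup_u|\tf_{jk}(u)-\hf_{jk}(u)|$ is bounded by $(n_k h_{jk}^2)^{-1}\sum_{i} \|K'\|_\infty |\txi_{ijk}-\hxi_{ijk}|$, and the extra factor $h_{jk}^{-1}=(h\sqrt{\lambda_{jk}})^{-1}$ is precisely why the condition $mh^5\toinf$ in (D4) is required, so as to render the smoothing-induced error negligible relative to the bandwidth. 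Finally, on the set where the scores remain in compact intervals strictly inside the support, the densities are bounded away from zero by (B5), so the logarithm is Lipschitz there and $|\tQ_J(x)-\hQ_J(x)| \le \sum_{j\le J} C\,\sup_u|\tf_{jk}(u)-\hf_{jk}(u)|/\inf f_{jk}$, which tends to zero for $J$ growing slowly enough.

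The hard part will be the uniform control across the growing number of components $j\le J$. The eigenvalue spacings shrink as $j$ increases, so the perturbation bounds in the third stage deteriorate with $j$, and at the same time the factor $\lambda_{jk}^{-1/2}$ in the bandwidth amplifies the score errors for large $j$. The main effort is therefore to choose $J=J(n,\epsilon)$ increasing slowly enough, in concert with the bandwidth multiplier $h$, the number of measurements $m$, and the smoothing bandwidth $w$ constrained by (D4), that the accumulated error over all $j\le J$ still vanishes, while $J$ grows fast enough for the equivalence inherited from Theorem~\ref{thm:nonpar} to remain valid.
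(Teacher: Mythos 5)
Your proposal is correct and follows essentially the same route as the paper: the paper proves a key lemma bounding $\sup_{x\in\cS(c)}|\tf_{jk}(\tx_j)-f_{jk}(x_j)|$ by adding the pre-smoothing error $|\tf_{jk}(\tx_j)-\hf_{jk}(\hx_j)|=O_P\bigl(m^{-2/5}h^{-2}\bigr)$ (via the kernel's Lipschitz property, the trajectory rate $\|\tXik-\Xik\|=O_P(m^{-2/5})$ from the smoothing theory of Kong et al., and eigenfunction perturbation bounds) to the rate of Lemma~1, and then replays the slowly-growing-$J$ argument of Theorem~1 exactly as you outline. Your identification of the $h^{-2}$ amplification and the role of $mh^5\toinf$, and of the accumulation of errors over $j\le J$ as the delicate point, matches the paper's treatment.
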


\section{Numerical Properties}
\subsection{Practical Considerations} \label{s:imp}
We propose three practical implementations for estimating the projection score densities $f_{jk}(\cdot)$ that will be compared in our data illustrations, along with other previously proposed functional classification methods. All of these involve the choice of tuning parameters (namely bandwidths and number of included components) and we describe in the following how these are specified.

Our first implementation is the nonparametric density classifier as in \eqref{eq:Qest}, where one estimates the density of each projection by applying kernel density estimators to the observed sample scores as in \eqref{eq:npd}. For these kernel estimates we use a Gaussian kernel and the bandwidth multiplier $h$ is chosen by 10-fold cross-validation (CV), minimizing the misclassification rate.

The second implementation is nonparametric regression as in \eqref{eq:QestR}, where we apply kernel smoothing \citep{nada:64, wats:64} to the scatter plots of the pooled estimated scores and group labels. For each scatter plot, a Gaussian kernel is used and the the bandwidth multiplier $h$ is also chosen by 10-fold CV.

Our third implementation, referred to as Gaussian method and included mainly for comparison purposes, assumes each of the projections to be normally distributed with mean and variance estimated by the sample mean $\hmu_{jk} = \sumink\hxi_{ijk} / n_k$ and sample variance $\hlambda_{jk} = \sumink (\hxi_{ijk} - \hmu_{jk})^2 / (n_k-1)$ of $\hxi_{ijk}$, $i=1, \dots, n$. We then  use the density of $N(\hmu_{jk}, \hlambda_{jk})$ as $\hf_{jk}(\cdot)$. This Gaussian implementation differs from the quadratic discriminant implementation discussed for example in \ci{hall:13:1}, 
 as in our approach we always force the projection directions for the two populations to be the same. This has the practical advantage of providing more stable estimates for the eigenfunctions and is a prerequisite for constructing nonparametric Bayes classifiers for functional predictors. 
 
For numerical stability, if the densities are zero we insert a  very small lower threshold (100 times the gap between 0 and the next double-precision floating-point number). Finally, the number of projections $J$ used in our implementations is chosen by 10-fold CV (together with the selection of $h$ for the nonparametric classifiers).

\subsection{Simulation Results} \label{s:sim}
We illustrate our Bayes classifiers in several simulation settings. In each setting we generate $n$ training samples, each having $1/2$ chance to be from $\Pi_0$ or $\Pi_1$. The samples are generated by $X_i^{[k]}(t) = \mu_k(t) + \sum_{j=1}^{50} A_{ijk} \phi_j(t)$, $i = 1, \dots, n_k$, where $n_k$ is the number of samples in $\Pi_k$, $k=0,\, 1$. Here the $A_{ijk}$ are independent random variables with mean 0 and variance $\lambda_{jk}$, which are generated under two distribution scenarios: Scenario A, the $A_{ijk}$ are normally distributed, and Scenario B, the $A_{ijk}$ are centered exponentially distributed. In both scenarios, the $\phi_j$ are the $j$th function in the Fourier basis, where $\phi_1(t)=1$, $\phi_2(t) = \sqrt{2}\cos(2 \pi t)$, $\phi_3(t) = \sqrt{2}\sin(2 \pi t)$, etc., $t \in [0,\, 1]$. We set $\mu_0(t) = 0$, and $\mu_1(t) = 0$ or $t$ for the same or the different mean scenarios, respectively. The variances of the scores under $\Pi_0$ are $\lambda_{j0} = e^{-j/3 }$, and those under $\Pi_1$ are $\lambda_{j1} =e^{-j/3 }$ or $e^{-j/2 }$ for the same or the different variance scenarios, respectively, for $j = 1, \dots, \, 50$. The random functions are sampled at 51 equally spaced time points from 0 to 1, with additional small measurement errors in the form of independent Gaussian noise with mean 0 and variance 0.01 to each observation for both scenarios. We use modest sample sizes of $n = 50$ and $n=100$ for training the classifiers, and 500 samples for evaluating the predictive performance. We repeat each simulation experiment  $500$ times.

We compare the predictive performance of the following functional classification methods: (1) the centroid method \citep{hall:12:2}; (2) the proposed nonparametric Bayes classifier in three versions: Basing estimation on Gaussian densities (Gaussian), nonparametric densities (NPD), or nonparametric regression (NPR), which are the three implementations discussed above; (3) logistic regression; and (4) the functional quadratic discriminant as in \cite{gale:15}. The functional quadratic discriminant was never the winner for any scenario in our simulation study so we omitted it from the tables.

The results for Scenario A are shown in \autoref{tab:norm}, whose upper half corresponds to using the noisy predictors as is, and the lower half corresponds to pre-smoothing the predictors by local linear smoother with CV bandwidth choice. For these cases, the proposed nonparametric Bayes classifiers  are seen to have  superior performance for those scenarios where covariance differences in the populations are present, while the centroid and the logistic methods work best for those cases where  the differences are exclusively in the mean. 

\begin{table}[ht]
\caption{Misclassification rates (in percent) for Scenario A (Gaussian case), with standard deviation for the mean estimate in brackets. The Gaussian, NPD, and NPR methods correspond to the Gaussian, nonparametric density, and nonparametric regression implementations of the proposed Bayes classifiers, respectively. The upper half corresponds to using the functional predictors with noisy measurements as is, and the lower half corresponds to using pre-smoothed  predictors.} 
\label{tab:norm}
\centering
{ \tight
\begin{tabular}{llllllll}
	\hline
	$n$   &  $\mu$   &  $\lambda$ & Centroid                         & Gaussian                         & NPD         & NPR         & Logistic                         \\ \hline
\multicolumn{8}{l}{without pre-smoothing:} \\
	50  & same & diff   & 49.3 (0.12)                      & \textbf{\color{blue}23.8 (0.18)} & 24.5 (0.21) & 26.7 (0.22) & 49.4 (0.12)                      \\
	    & diff & same   & \textbf{\color{blue}40.2 (0.16)} & 41.5 (0.16)                      & 43.4 (0.17) & 42.4 (0.18) & 40.7 (0.16)                      \\
	    & diff & diff   & 37.9 (0.17)                      & \textbf{\color{blue}20.8 (0.18)} & 21.2 (0.2)  & 23.3 (0.22) & 38.8 (0.17)                      \\
	100 & same & diff   & 49.1 (0.13)                      & \textbf{\color{blue}17.2 (0.11)} & 18.6 (0.12) & 20 (0.13)   & 49.3 (0.13)                      \\
	    & diff & same   & \textbf{\color{blue}37.8 (0.13)} & 39.2 (0.13)                      & 41.4 (0.15) & 40.2 (0.16) & 38.3 (0.13)                      \\
	    & diff & diff   & 35.3 (0.14)                      & \textbf{\color{blue}14.6 (0.1)}  & 15.8 (0.1)  & 17.1 (0.12) & 35.8 (0.15)                      \\
	    \hline
	    \multicolumn{8}{l}{with pre-smoothing:} \\
	50  & same & diff   & 48.9 (0.14)                      & \textbf{\color{blue}22.7 (0.17)} & 23.1 (0.2)  & 25.7 (0.21) & 48.9 (0.13)                      \\
	    & diff & same   & 36.5 (0.24)                      & 38.3 (0.22)                      & 40.7 (0.22) & 39.3 (0.23) & \textbf{\color{blue}32.2 (0.26)} \\
	    & diff & diff   & 33.4 (0.25)                      & \textbf{\color{blue}18 (0.16)}   & 18.4 (0.18) & 20.3 (0.2)  & 28.1 (0.26)                      \\
	100 & same & diff   & 48.9 (0.14)                      & \textbf{\color{blue}17.1 (0.11)} & 18.1 (0.12) & 19.4 (0.13) & 49.1 (0.14)                      \\
	    & diff & same   & 29.8 (0.23)                      & 31.6 (0.23)                      & 33.6 (0.25) & 31.9 (0.25) & \textbf{\color{blue}25.4 (0.15)} \\
	    & diff & diff   & 27 (0.24)                        & \textbf{\color{blue}13 (0.11)}   & 14 (0.12)   & 14.8 (0.13) & 21.1 (0.14)                      \\ \hline
\end{tabular}
}
\end{table}


In the cases where the covariances differ, the proposed Bayes classifiers have substantial performance advantages over other methods. This is because they take into account both mean and covariance differences between the populations. When the covariances are the same but the means differ, the centroid method is the overall best if we use the noisy predictors while the Gaussian implementation of the proposed Bayes classifiers has comparable performance. This is expected because our method estimates more parameters than the centroid method while both assume the correct model for the simulated data. The quadratic method (not shown) is not performing well for these simulation data because it fails to take into account the common eigenfunction structure. The logistic method gains considerable performance from pre-smoothing, and becomes the winner when only a mean difference is present.


The simulation results for Scenario B are reported in \autoref{tab:exp}.
The performance of the proposed Bayes classifiers deteriorates somewhat in this scenario but they still perform substantially better than all other methods when covariance differences occur. When there are differences between the covariances, the Gaussian implementation performs the best when the sample size is small, while the nonparametric density implementation performs the best when the sample size is large. 

\begin{table}[ht]
\caption{Misclassification rates (in percent) for Scenario B (exponential case), with standard deviation for the mean estimate in brackets. The upper half corresponds to using the functional predictors with noisy measurements as is, and the lower half corresponds to using pre-smoothed  predictors.}
\label{tab:exp}
\centering
{ \tight
\begin{tabular}{llllllll}
	\hline
	$n$   &  $\mu$   &  $\lambda$ & Centroid                         & Gaussian                         & NPD                              & NPR                              & Logistic                         \\ \hline
	\multicolumn{8}{l}{without pre-smoothing:} \\
	50  & same & diff   & 49 (0.13)                        & \textbf{\color{blue}30.2 (0.19)} & 31.2 (0.22)                      & 33.5 (0.23)                      & 49.2 (0.13)                      \\
	    & diff & same   & \textbf{\color{blue}38.3 (0.21)} & 40.6 (0.21)                      & 39.5 (0.22)                      & 38.6 (0.21)                      & 38.7 (0.23)                      \\
	    & diff & diff   & 35 (0.2)                         & \textbf{\color{blue}23.3 (0.18)} & 23.5 (0.21)                      & 24.3 (0.22)                      & 35.7 (0.22)                      \\
	100 & same & diff   & 48.8 (0.14)                      & 26 (0.13)                        & \textbf{\color{blue}25.4 (0.14)} & 26.7 (0.16)                      & 48.9 (0.13)                      \\
	    & diff & same   & 35.8 (0.16)                      & 38.6 (0.19)                      & 36.3 (0.18)                      & \textbf{\color{blue}35.7 (0.16)} & 35.9 (0.16)                      \\
	    & diff & diff   & 32.4 (0.14)                      & 18.7 (0.13)                      & \textbf{\color{blue}16.7 (0.13)} & 17 (0.14)                        & 32.7 (0.15)                      \\\hline
	    	    \multicolumn{8}{l}{with pre-smoothing:} \\
	50  & same & diff   & 48.5 (0.15)                      & \textbf{\color{blue}28.3 (0.18)} & 29.1 (0.21)                      & 31.4 (0.24)                      & 48.6 (0.14)                      \\
	    & diff & same   & 35 (0.24)                        & 38.4 (0.22)                      & 38 (0.22)                        & 36.5 (0.23)                      & \textbf{\color{blue}30.9 (0.23)} \\
	    & diff & diff   & 30.3 (0.24)                      & \textbf{\color{blue}20.2 (0.18)} & 20.9 (0.22)                      & 21.4 (0.22)                      & 27 (0.23)                        \\
	100 & same & diff   & 48.5 (0.15)                      & 25.1 (0.13)                      & \textbf{\color{blue}24 (0.14)}   & 25 (0.14)                        & 48.4 (0.15)                      \\
	    & diff & same   & 29.2 (0.23)                      & 33.3 (0.23)                      & 32.3 (0.2)                       & 31.1 (0.21)                      & \textbf{\color{blue}25.4 (0.17)} \\
	    & diff & diff   & 26.1 (0.22)                      & 16.5 (0.14)                      & \textbf{\color{blue}14.6 (0.13)} & 14.7 (0.13)                      & 21.6 (0.16)                      \\ \hline
\end{tabular}
}
\end{table}

\subsection{ Data Illustrations} \label{s:data}
We present three data examples to illustrate the performance of the proposed Bayes classifiers for functional data. 
We pre-smooth the yeast data by local linear smoother with CV bandwidth choice since the original observations are quite noisy (shown in \autoref{fig:yeast_adhd_obs}), while for the wine and the ADHD datasets we just use the original curves which are preprocessed and smooth. Following the procedure described in \cite{benk:09}, we  test the common eigenspaces assumption for the first $J=5$ and 20 eigenfunctions and report in \autoref{tab:pv} the p-values obtained from 2000 bootstrap samples. Only one test rejects the null hypothesis that the first $J$ eigenspaces are shared by the two populations at 0.1 significance level, which shows the common eigenfunction assumption is reasonable.

\begin{table}[h!]
\caption{The p-values for testing the common eigenspace assumptions, using $J$ = 5 or 20 eigenfunctions. We report the results for both the original (yeast) and pre-smoothed version (yeast\_pre) version of the yeast gene expression dataset.} 
\label{tab:pv}
\centering
{\small \tight
\begin{tabular}{c|cccccc}
	\hline
	       & ADHD & yeast & yeast\_pre &      wine\_full      & wine\_d1 &  \\ \hline
	J = 5  & 0.31 & 0.57  &    0.15    & \textbf{\blu{0.098}} &   0.31   &  \\
	J = 20 & 0.80 & 0.63  &    0.76    &         0.72         &   0.55   &  \\ \hline
\end{tabular}}
\end{table}
We used repeated 10-fold CV misclassification error rates to evaluate the performance of the classifiers. In order to obtain the correct CV misclassification error rate the selection of the number of components and bandwidth is carried out on only the training data in each CV partition. We repeat the process 500 times and report the mean misclassification rates, and the standard deviations of the mean estimates. The misclassification results for different datasets are shown in \autoref{tab:real}.

As can be seen from \autoref{tab:real}, which contains all results for misclassification rates across the compared methods and data sets, the proposed nonparametric Bayes classifiers and the functional quadratic discriminant perform overall well, indicating that  covariance operator differences contain crucial information for classification. Among the various implementations of the proposed Bayes classifiers, the Gaussian version performs best for these data. Pre-smoothing the predictors slightly improve the misclassification rate for the yeast dataset. We now provide more details about the various data.

\begin{table}[ht]
\caption{CV misclassification rates (in percent) for the three example data. ADHD refers to the attention deficit hyperactivity disorder data. The yeast data refers to cell cycle gene expression time course data, and yeast\_pre refers to the pre-smoothed version of the yeast data. The wine datasets concern the classification of the original spectra (wine\_full) and the first derivative (wine\_d1), which is constructed  by forming difference quotients. }
\label{tab:real}
\centering
\small \tight
\begin{tabular}{lllllll}
	\hline
	Data       & Centroid    & Gaussian                                     & NPD         & NPR                             & Logistic    & Quadratic   \\ \hline
	ADHD       & 41.7 (0.2)  & \textbf{\color{blue}34.1 (0.1)}              & 36.7 (0.2)  & 36.8 (0.2)                      & 47 (0.2)    & 34.6 (0.2)  \\
	yeast      & 20.0 (0.08) & \textbf{\color{blue}12.5 (0.09)}             & 15 (0.1)    & 14.4 (0.1)                      & 20.8 (0.1)  & 14.5 (0.09) \\
	yeast\_pre & 20.7 (0.1)  & \textbf{\color{blue}\color{blue}12.3 (0.06)} & 14.3 (0.09) & 14.1 (0.1)                      & 17.2 (0.09) & 14.4 (0.07) \\
	wine\_full & 6.84 (0.07)  & 5.08 (0.06)                                   & 5.09 (0.06)  & \textbf{\color{blue}4.67 (0.06)} & 7.56 (0.08)  & 5.93 (0.08)  \\
	wine\_d1   & 7.15 (0.07)  & 6.99 (0.06)                                   & 5.75 (0.06)  & \textbf{\color{blue}5.37 (0.06)} & 6.64 (0.07)  & 5.69 (0.07)  \\ \hline
\end{tabular}

\end{table}

The first data example concerns classifying attention deficit hyperactivity disorder (ADHD) from brain imaging data. The data were obtained in 
 the ADHD-200 Sample Initiative Project. ADHD is the most commonly diagnosed behavioral
disorder in childhood, and can continue through adolescence and adulthood. The
symptoms include lack of attention, hyperactivity, and impulsive behavior. We base our analysis on 
filtered preprocessed resting state data from the New York University (NYU) Child
Study Center, called the  anatomical automatic labelling atlas \citep{Mazoyer2002}, which contains 116 Regions of Interests (ROI) that have been fractionated into functional resolution of the original image using nearest-neighbor interpolation to create a discrete labelling value for each pixel of the image. The mean blood-oxygen-level dependent signal in each ROI is depicted for 172 equally spaced time points. We use only subjects for which the ADHD index is in the  lower quartile (defining $\Pi_0$)  or upper quartile (defining $\Pi_1$), with $n_0 = 36$ and $n_1=34$, respectively, and regard the group membership as the binary response to be predicted. The functional predictor is taken to be the average of the mean blood-oxygen-level dependent signals of the 91st to 108th regions, shown in \autoref{fig:yeast_adhd_obs}, corresponding to the cerebellum that has been found to have significant impact on the ADHD index in previous studies \citep{Berquin1998}. 

\begin{figure}
\centering
\includegraphics[width=0.45\linewidth]{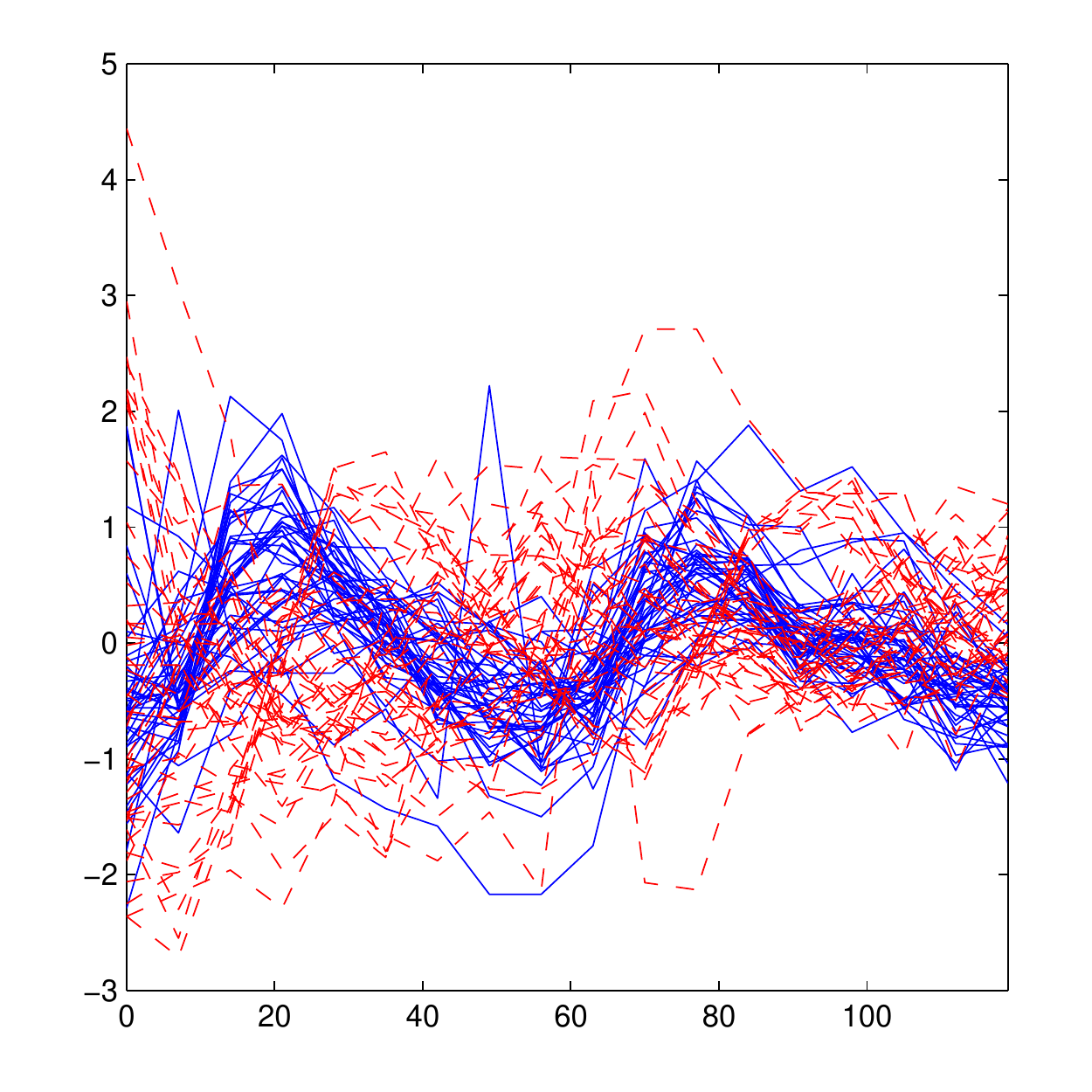}
\includegraphics[width=0.45\linewidth]{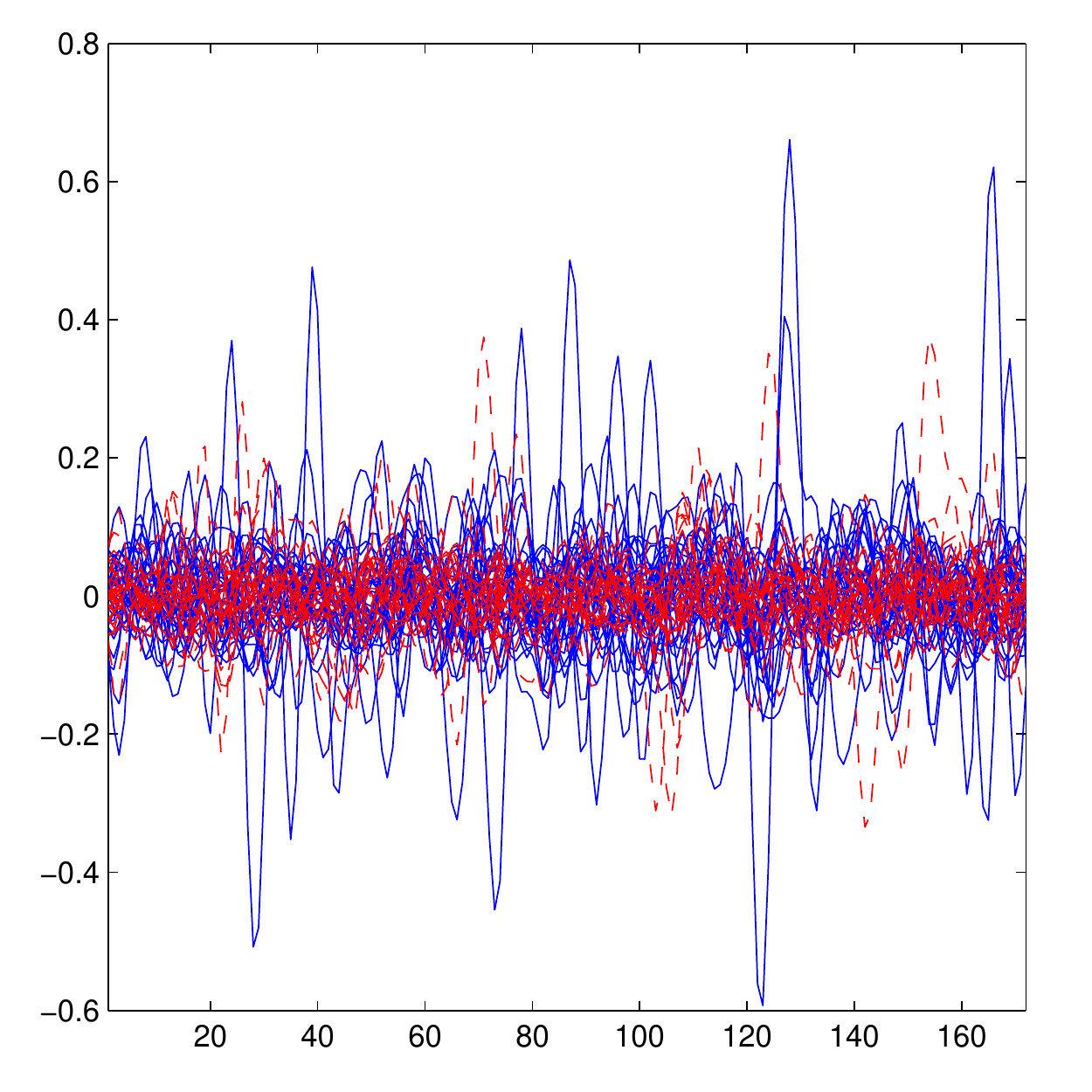}
\caption{The original functional predictors for the yeast (left panel) and Attention Deficit Hyperactivity Disorder (ADHD, right panel) data.  $\Pi_0$ is shown in dashed lines and $\Pi_1$  in solid lines.}
\label{fig:yeast_adhd_obs}
\end{figure}

Our second data example focuses on  yeast gene expression time courses during the cell cycle as predictors, which are described in \cite{spell:98}. The predictors are  gene expression level time courses for   $n=89$ genes, observed at 18 equally spaced time points from 0 minute to 119 minutes. The expression trajectories for genes related to G1 phase regulation of the cell cycle were regarded as $\Pi_1$ ($n_1 = 44$) and all others are regarded as $\Pi_0$ ($n_0 = 45$). The Gaussian  implementation of the proposed Bayes classifiers outperforms the other methods by a margin of at least 2\%, while the functional quadratic discriminant is also  competitive for this classification problem. Pre-smoothing improves the performance of all classifiers except the centroid method. 

In the third example we analyze wine spectra data. These data have been made available by Professor Marc Meurens, Université Catholique de Louvain, at \url{http://mlg.info.ucl.ac.be/index.php?page=DataBases}. The dataset contains a training set of 93 samples and a testing set of 30 samples. We combine the training set and test set into a dataset of $n = 123$. For each sample the mean infrared spectrum on 256 points and the alcohol content are observed. $\Pi_1$ consists of the samples  with alcohol contents  greater than 12 ($n_1 = 78$) and $\Pi_0$ ($n_0 = 45$) of the rest. We consider both the original observations (wine\_full) and the first derivative (wine\_d1), which is constructed  by forming difference quotients. As for the other examples,  the misclassification errors for the various methods are listed in \autoref{tab:real}.

The original functional predictors for the wine example and the mean functions for each group are displayed in the left and the right panel of  \autoref{fig:wine}, respectively. There are clear mean differences between  the two groups, especially around the peaks, for example at $t=180$. We show in \autoref{fig:wine_densities} the kernel density estimates of the first four projection scores, with $\Pi_0$ in dashed lines and $\Pi_1$  in solid lines. Clearly the densities are not normal, and some of them (the first and second projections) appear to be bimodal.  The differences between each pair of densities are not limited to location and scale, but also manifest themselves in  the shapes of the densities; in the second and fourth plots the density estimate from one group is close to bimodal and the other density is not.
The nonparametric implementations of the proposed Bayes estimators based on nonparametric regression or nonparametric density estimation are capable of reflecting such shape differences and therefore outperform the classifiers based on Gaussian assumptions.

\begin{figure}
\centering
\includegraphics[width=0.45\linewidth]{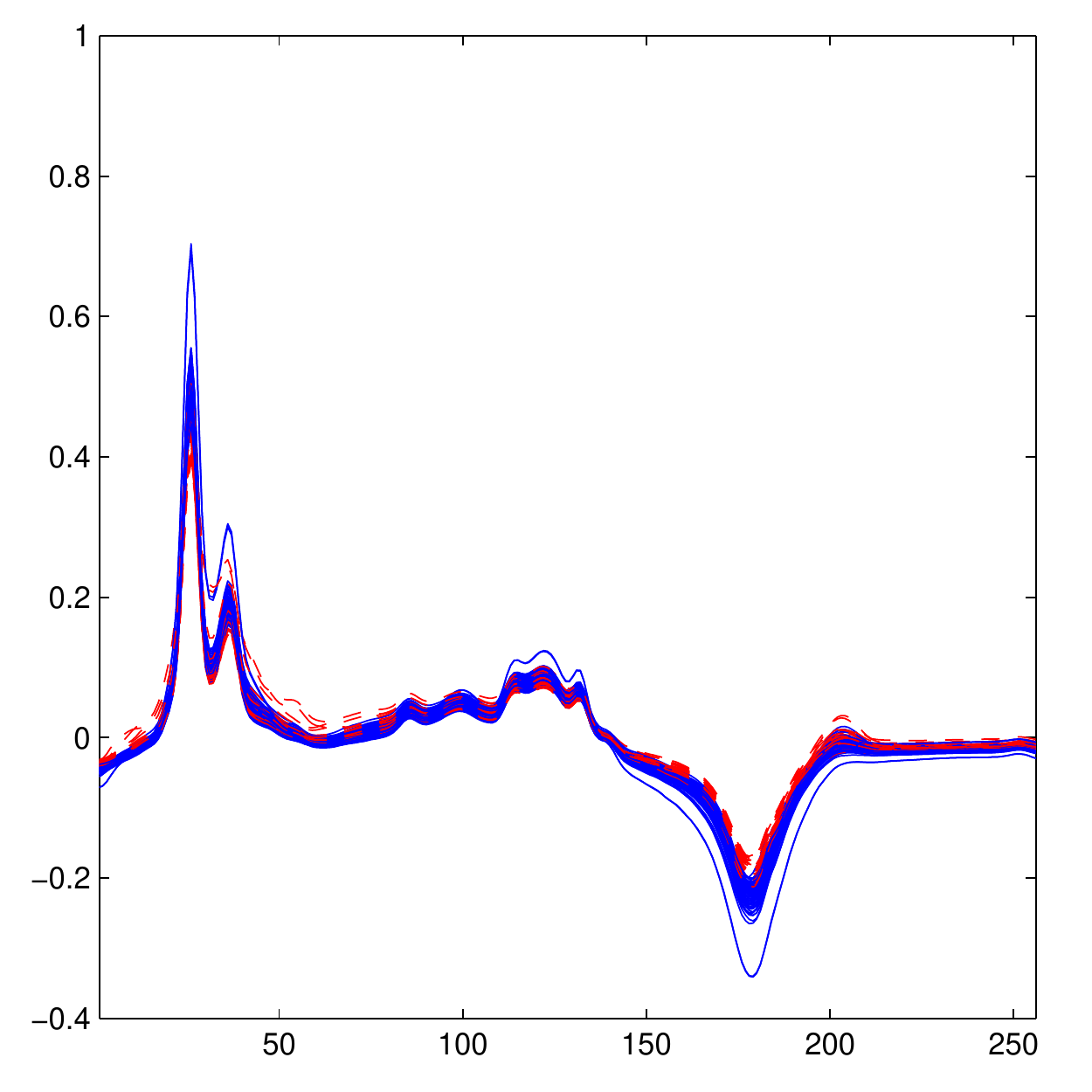}
\includegraphics[width=0.45\linewidth]{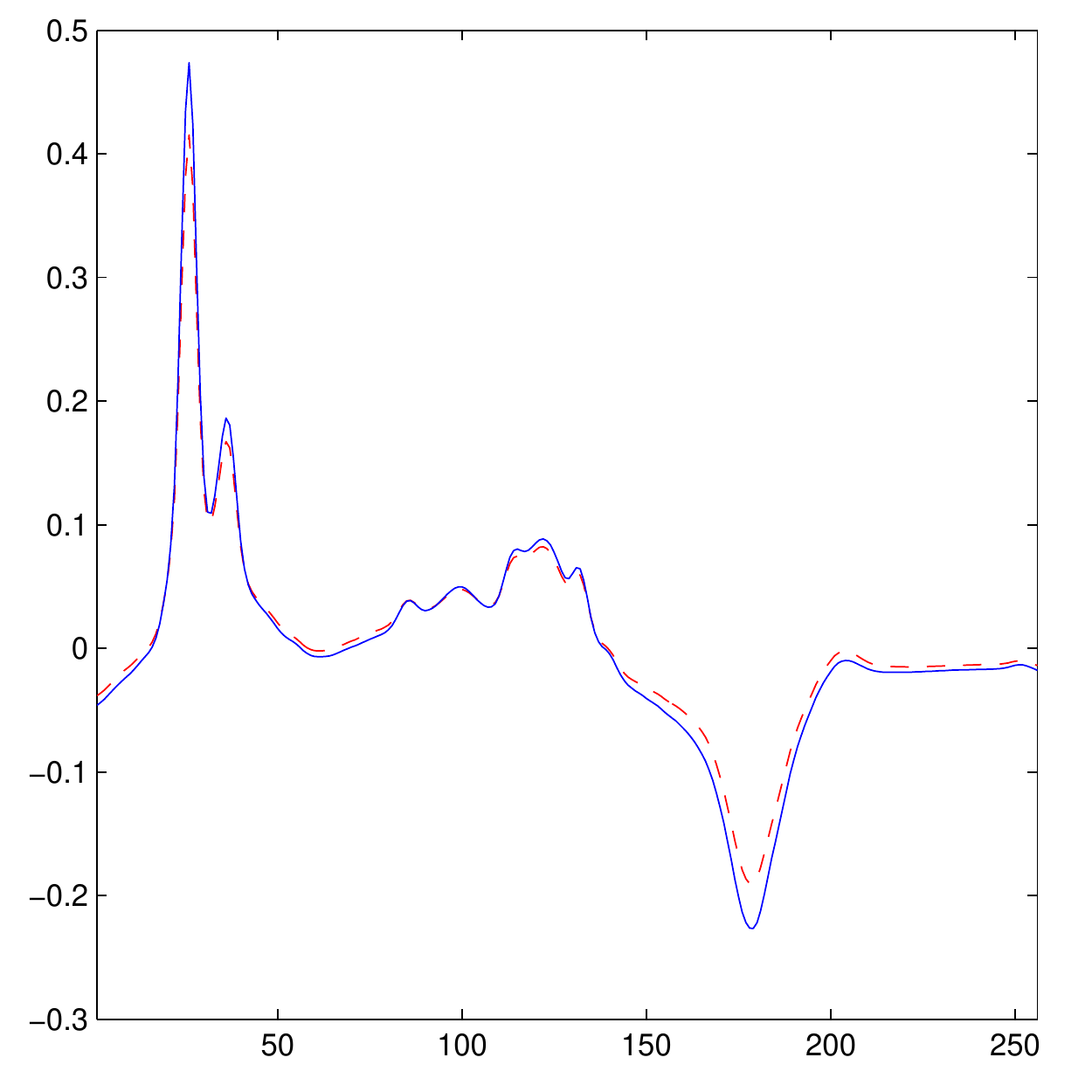}
\caption{The Wine Spectra. The left panel shows the original trajectories and the right panel shows the mean curves for each group. Trajectories of $\Pi_0$ are displayed  in dashed lines and those of $\Pi_1$ in solid lines.}
\label{fig:wine}
\end{figure}


\begin{figure}
\centering
\includegraphics[width=\linewidth]{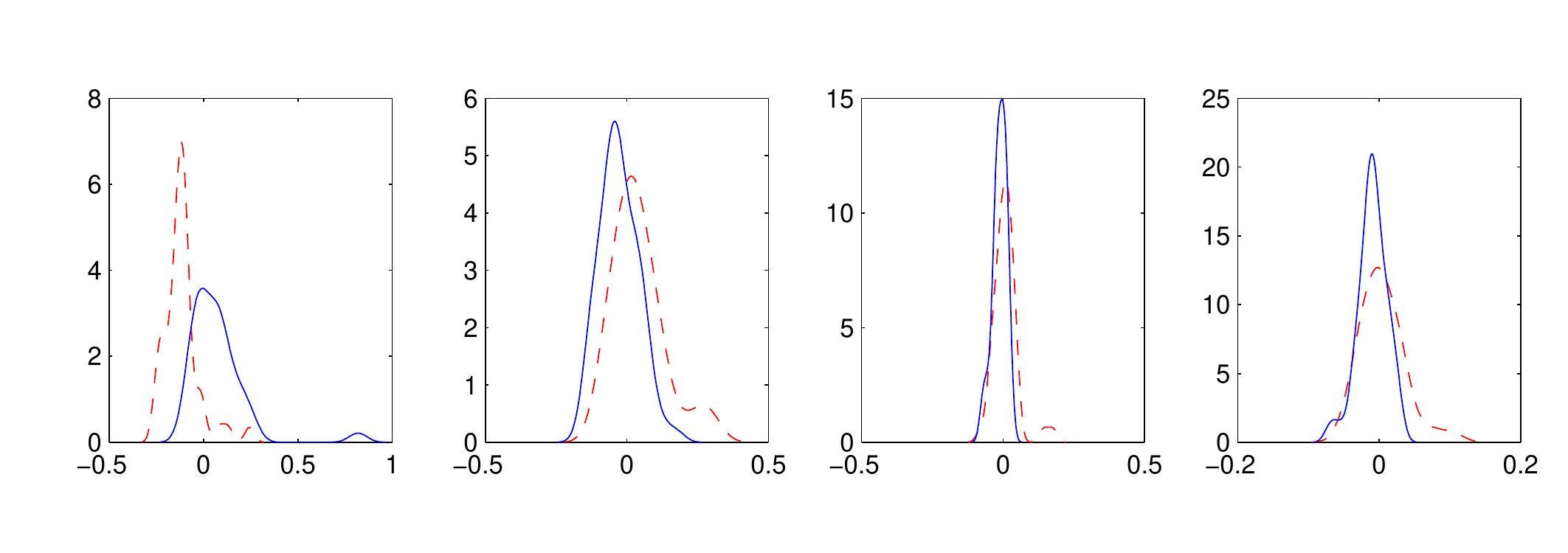}
\caption{Kernel density estimates for the first four projection scores for the wine spectra.  $\Pi_0$ is shown in dashed lines and $\Pi_1$  in solid lines.}
\label{fig:wine_densities}
\end{figure}

In all examples, the quadratic discriminant performs better than the centroid method, suggesting that in these examples there is information contained in the differences between the covariance operators of the two groups to be classified. In the presence of such more subtle differences
and additional shape differences in the distributions of projection scores the proposed nonparametric Bayes methods work particularly well for functional classification. 

\vspace{0.15in}



\appendix

\section{Technical Arguments} \label{app}
For simplicity of presentation we adopt throughout all proofs the simplifying assumptions mentioned in \autoref{s:theory}. We remark that $\hmu_k$, $\hG_k$, $\hphi_{j}$, and $\hlambda_{jk}$ constructed from the sample mean, covariance, eigenfunctions and eigenvalues of the completely observed functions are consistent estimates for their corresponding targets, as per \cite{hall:06:1}. 

\subsection{Proof of Theorem~\ref{thm:nonpar}} \label{app:nonparEst}
Let $\cS(c) = \{x:||x|| \le c \}$ be a bounded set of all square integrable functions for $c > 0$, where $||\cdot||$ is the $L^2$ norm. We will use the following lemma:
\begin{Lemma} \label{lem:hff}
Assuming (B1)--(B4), for any $j \ge 1$, $k = 0, 1$, 
\begin{equation} \label{eq:hff}
\sup_{x \in \cS(c)} |\hf_{jk}(\hx_j) - f_{jk}(x_j)| = O_P(h + (\frac{nh}{\log n})^{-\half}).
\end{equation}\end{Lemma}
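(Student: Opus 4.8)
The plan is to separate the two distinct sources of error in $\hf_{jk}(\hx_j)$ relative to $f_{jk}(x_j)$ — the replacement of the true eigenfunction $\phi_j$ by its estimate $\hphi_j$, which perturbs both the scores feeding the kernel density estimate and the evaluation point, and the kernel density estimation error itself — by the telescoping decomposition
\[
\hf_{jk}(\hx_j) - f_{jk}(x_j) = \underbrace{[\hf_{jk}(\hx_j) - \barf_{jk}(\hx_j)]}_{(\mathrm{I})} + \underbrace{[\barf_{jk}(\hx_j) - \barf_{jk}(x_j)]}_{(\mathrm{II})} + \underbrace{[\barf_{jk}(x_j) - f_{jk}(x_j)]}_{(\mathrm{III})},
\]
and to bound each piece uniformly over $x \in \cS(c)$. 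A preliminary observation that makes the supremum manageable is that, because $\phi_j$ and $\hphi_j$ have unit norm, $|x_j| = |\langle x, \phi_j\rangle| \le \|x\|\,\|\phi_j\| = \|x\| \le c$ and likewise $|\hx_j| \le c$, so every evaluation point lies in the fixed compact interval $[-c,c]$. I would also record at the outset the FPCA perturbation rate $\rho_n \coloneqq \|\hphi_j - \phi_j\| = O_P(n^{-1/2})$, valid for fixed $j$ under (B1)--(B3) and the distinctness of the pooled eigenvalues \citep{bosq:00, hall:06:1}.

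Term $(\mathrm{III})$ is the classical uniform kernel density estimation error, evaluated at the true scores and true projection, so I would bound $\sup_{|u|\le c}|\barf_{jk}(u) - f_{jk}(u)|$ by a bias plus a stochastic part: under (B4) the density $g_{jk}$, hence $f_{jk}$, has a bounded derivative, giving bias $O(h)$, while the symmetric, compactly supported kernel with bounded derivatives yields the standard uniform stochastic rate $O_P((nh/\log n)^{-1/2})$, e.g.\ via a Bernstein inequality combined with a covering of $[-c,c]$. Term $(\mathrm{II})$ is where the decomposition pays off: since it uses the true-score estimate $\barf_{jk}$, whose derivative $\barf_{jk}'$ is a kernel estimator of the bounded $f_{jk}'$ and therefore satisfies $\sup_{|u|\le c}|\barf_{jk}'(u)| = O_P(1)$ (its uniform fluctuation being $O_P((nh^3/\log n)^{-1/2}) = o_P(1)$ by (B4)), a mean value argument gives
\[
\sup_{x\in\cS(c)}|\barf_{jk}(\hx_j) - \barf_{jk}(x_j)| \le \sup_{|u|\le c}|\barf_{jk}'(u)| \cdot \sup_{x\in\cS(c)}|\hx_j - x_j| \le O_P(1)\cdot c\rho_n = O_P(n^{-1/2}),
\]
which is $o_P((nh/\log n)^{-1/2})$ and thus absorbed into the stated rate. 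The point is that shifting the evaluation point is cheap precisely because it is done against the true-score density estimate, whose derivative is $O_P(1)$ rather than $O_P(h^{-1})$.

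Term $(\mathrm{I})$, the difference between the density estimates built from estimated and true scores at the common point $\hx_j$, is the main obstacle and the only non-elementary step. Using $|\hx_j|\le c$, I would bound it by $\sup_{|u|\le c}|\hf_{jk}(u) - \barf_{jk}(u)|$ and then transfer to the standardized principal components through the exact location--scale identity $\hf_{jk}(u) = \lambda_{jk}^{-1/2}\hg_{jk}(\lambda_{jk}^{-1/2}(u - a_{jk}))$, with $a_{jk}$ the group mean projection and the same identity relating $\barf_{jk}$ to $\barg_{jk}$, so that for fixed $j,k$ the problem reduces, up to the fixed constant $\lambda_{jk}^{-1/2}$, to bounding $\sup_{v\in\cI}|\hg_{jk}(v) - \barg_{jk}(v)|$ on a compact $\cI$. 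A naive mean value bound here yields only $O_P(n^{-1/2}h^{-2})$, which exceeds the target, so the refined higher-order analysis of how estimating the eigenfunctions propagates into the density estimate is required; this is exactly the content of the uniform rate of \cite{hall:10}, established in detail in \cite{hall:11}, whose governing conditions (3.6)--(3.9) are matched by our (B1)--(B4). Invoking that result shows term $(\mathrm{I})$ is of order no larger than $O_P(h + (nh/\log n)^{-1/2})$, and combining the three bounds gives \eqref{eq:hff}.
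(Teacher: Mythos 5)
Your proposal is correct and takes essentially the same route as the paper: both proofs reduce the genuinely hard part --- the effect of using estimated eigenfunctions/scores inside the kernel density estimate --- to the uniform rate of \cite{hall:10} (detailed in \cite{hall:11}), and dispose of the rest with classical uniform KDE rates \citep{ston:83,lieb:96} together with root-$n$ perturbation bounds for $\hphi_j$ and $\hlambda_{jk}$. The only differences are bookkeeping: you telescope into three terms on the $f$-scale and handle the evaluation-point shift $\hx_j \to x_j$ by an explicit mean-value argument against $\barf_{jk}$, whereas the paper works on the standardized $g$-scale, absorbs that shift into the Delaigle--Hall citation, and accounts separately for the scaling correction $|\hlambda_{j0}^{-1/2} - \lambda_{j0}^{-1/2}| = O_p(n^{-1/2})$.
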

\begin{proof}
We prove the statement  for $k=0$; the proof for  $k=1$ is analogous. Observe 
\begin{align}
\sup_{x \in \cS(c)}|\hg_{j0}(\frac{\hx_j}{\sqrt{\hlambda_{j0}}} ) - g_{j0}(\frac{x_j}{\sqrt{\lambda_{j0}}} )| 
& \le 
\sup_{x \in \cS(c)} |\hg_{j0}(\frac{\hx_j}{\sqrt{\hlambda_{j0}}} ) - \barg_{j0}(\frac{x_j}{\sqrt{\lambda_{j0}}} )| + 
\sup_{x \in \cS(c)}|\barg_{j0}(\frac{x_j}{\sqrt{\lambda_{j0}}} ) - g_{j0}(\frac{x_j}{\sqrt{\lambda_{j0}}} )| \nonumber\\
& = o_p((nh)^{-1/2}) + O_p(h + (\frac{nh}{\log n}) ^{-\half}) = O_p(h + (\frac{nh}{\log n}) ^{-\half}), \label{eq:gRate}
\end{align}
where the first rate is due to \cite{hall:10}, and the second to,  for example, \cite{ston:83} or \cite{lieb:96}. Then 
\begin{align}
\sup_{x \in \cS(c)}|\hf_{j0}(\hx_j) - f_{j0}(x_j)| & = \sup_{x \in \cS(c)} |\frac{1}{\sqrt{\hlambda_{j0}}} \hg_{j0}(\frac{\hx_j}{\sqrt{\hlambda_{j0}}}) - \frac{1}{\sqrt{\lambda_{j0}}}g_{j0}(\frac{x_j}{\sqrt{\lambda_{j0}}})| \nonumber \\
& \le 
\sup_{x \in \cS(c)} \left\{\frac{1}{\sqrt{\hlambda_{j0}}} |\hg_{j0}(\frac{\hx_j}{\sqrt{\hlambda_{j0}}} ) - g_{j0}(\frac{x_j}{\sqrt{\lambda_{j0}}} )| + g_{j0}(\frac{x_j}{\sqrt{\lambda_{j0}}}) |\frac{1}{\sqrt{\hlambda_{j0}}} - \frac{1}{\sqrt{\lambda_{j0}}}| \right\} \nonumber \\
& = 
O_p(\sup_{x \in \cS(c)}|\hg_{j0}(\frac{\hx_j}{\sqrt{\hlambda_{j0}}} ) - g_{j0}(\frac{x_j}{\sqrt{\lambda_{j0}}} )|) + 
O_p( |\frac{1}{\sqrt{\hlambda_{j0}}} - \frac{1}{\sqrt{\lambda_{j0}}}| ) \nonumber \\
& = 
O_p(h + (\frac{nh}{\log n}) ^{-\half}), \label{eq:fRate}
\end{align}
where the second equality follows from the consistency of $\hlambda_{j0}$ and boundedness of $g_{j0}$ (B4), and the third equality follows from \eqref{eq:gRate} and the fact that  $\hlambda_{j0}$ converges at a root-$n$ rate. 
\end{proof}

\begin{proof}[Proof of Theorem~\autoref{thm:nonpar}]
For simplicity we consider the case where the supports of $g_{j0}$ and $g_{j1}$ are in common. The case where the supports differ can be proven in two step:
First consider to classify elements $x$ whose projections $x_j$ are in the intersection of supports of $g_{j0}$ and $g_{j1}$. Next consider to classify an element $x$ for which a projection score $x_j$ is not contained in the intersection of the supports, in which case $Q_J(x)$ will be $\pm \infty$ whence $\hQ_J(x)$ will also diverge to $\pm \infty$, respectively, and thus consistency is obtained.

Now fix $\epsilon > 0$. Set $c$ be such that $P(||X|| > c) = P(X \notin \cS(c)) \le \epsilon/2$. First we prove there exists an event $S$ such that $\hQ_J(X) - Q_J(X) \tozero$ on $S$ with $P(S) > 1 - \epsilon$. For $j \ge 1$ and $k= 0, 1$, by Lemma~\autoref{lem:hff} there exists $M_{jk} > 0$ such that the events
\[
S_{jk} \coloneqq \{\sup_{x \in \cS(c)}|\hf_{jk}(\hx_j) - f_{jk}(x_j)| \le M_{jk}(h + (\frac{nh}{\log n}) ^{-\half}) \}
\]
have $P(S_{jk}) \ge 1 - 2 ^{-(j+2)}\epsilon$. Letting $S \coloneqq \left(\bigcap_{j\ge 1, k=0, 1}S_{jk}\right) \cap \left(\bigcap_{j\ge 1, k=0, 1}\{\xi_j\in \text{Supp}(f_{jk}) \}\right) \cap \{||X|| \le c \}$, we have $P(S) \ge 1 - \epsilon$, where Supp means the support of a density. Let $a_n$ be some increasing sequence such that $a_n \toinf$ and $a_n[h + (nh / \log n)^{-\half}] = o(1)$. Define  $\cU_{jk} = \{x: x_j \in \text{Supp}(f_{jk}) \}$, $\cU = \bigcap_{j\ge 1, k=0, 1}\cU_{jk}$, 
\begin{gather} 
d_{jk} = \min(1, \inf_{x \in \cS(c) \cap \cU} f_{jk}(x_j)), \text{ and} \\
J = \sup \left\{ J' \ge 1: \sum_{j\le J',\, k=0, 1} \frac{M_{jk}}{d_{jk}} \le  a_n \right\}. \label{eq:Jnonpar}
\end{gather}
Note that the $d_{jk}$ are finite by (B5), and $J$ is nondecreasing and tends to infinity as $n \toinf$. 
On $S$ we have 
\begin{equation} \label{eq:sumdjk}
\sum\onetoJ\frac{1}{d_{jk}}\sup_{x \in \cS(c)}|\hf_{jk}(\hx_j) - f_{jk}(x_j)| \le \sum\onetoJ \frac{M_{jk}}{d_{jk}} [h + (\frac{nh}{\log n})^{-\half}] \le  a_n [h + (\frac{nh}{\log n})^{-\half}] = o(1),
\end{equation}
where the first and second inequalities are due to the property of $S$ and $J$, respectively, and the last equality is by the definition of $a_n$. 

From \eqref{eq:sumdjk} we infer that on  $S$,
\begin{equation} \label{eq:djk}
\sup_{x \in \cS(c)}|\hf_{jk}(\hx_j) - f_{jk}(x_j)| \le d_{jk}/2 \quad \text{eventually and uniformly for all $j \le J$}.
\end{equation} 
Then it holds on $S$
\begin{align}
|\hQ_J(X) - Q_J(X)| & \le \sup_{x \in \cS(c) \cap \cU}|\hQ_J(x) - Q_J(x)|  \nonumber\\ 
& \le \sum_{j \le J, \,k = 0, 1} \sup_{x \in \cS(c) \cap \cU}|\log\hf_{jk}(\hx_j) - \log f_{jk}(x_j)| \nonumber \\
 & \le \sum_{j \le J, \,k = 0, 1}  \sup_{x \in \cS(c)}|\hf_{jk}(\hx_j) - f_{jk}(x_j)|\frac{1}{\inf_{x \in \cS(c) \cap \cU} \eta_{3jk}} \nonumber \\
& \le  \sum_{j \le J, \,k = 0, 1} \sup_{x \in \cS(c)}|\hf_{jk}(\hx_j) - f_{jk}(x_j)| \frac{2}{d_{jk}}, \nonumber \\
& = o(1) \label{eq:QdiffNonpar}
\end{align}
where the third inequality is by Taylor's theorem, $\eta_{3jk}$ is between $f_{jk}(x_{j})$ and $\hf_{jk}(\hx_j)$, the last inequality is due to \eqref{eq:djk} which holds for large enough $n$, and the equality is due to \eqref{eq:sumdjk}.  
We conclude that $P(S \cap \{\one\{\hQ_J(X) \ge 0 \} = Y\}) - P(S \cap \{\one\{Q_J(X) \ge 0 \} = Y\}) \tozero$ as $n \toinf$ by noting that $\hQ_J(X)$ converges to $Q_J(X)$ and thus has the same sign as $Q_J(X)$ as $n \toinf$. 
\end{proof}

\subsection{Proof of Theorem~\ref{thm:nonparR}} \label{app:nonparEstR}
\begin{proof}
Note 
\begin{align}
\hE(Y|\xi_{j} = u) & = \frac{\sum_{k=0}^1\sum_{i=1}^{n_k} k K(\frac{u-\hxi_{ijk}}{h_j}) }{\sum_{k=0}^1\sum_{i=1}^{n_k} K(\frac{u-\hxi_{ijk}}{h_j}) } \nonumber\\ 
& = \frac{\sum_{i=1}^{n_1} K(\frac{u-\hxi_{ij1}}{h_j})}{\sum_{i=1}^{n_1} K(\frac{u-\hxi_{ij1}}{h_j}) + \sum_{i=1}^{n_0} K(\frac{u-\hxi_{ij0}}{h_j})} \nonumber\\
& = \frac{\hpi_1 \hf_{j1}(u)}{\hpi_1 \hf_{j1}(u) + \hpi_0 \hf_{j0}(u)}, \nonumber
\end{align}
where $\hf_{jk}$ are the kernel density estimators with bandwidth $h_j$. So 
\begin{align*}
\hQ_J^R(x) = & \sum\onetoJ \log \left(\frac{\hpi_0 \hE(Y|\xi_{j} = \hx_j) }{\hpi_1 [1 - \hE(Y|\xi_{j} = \hx_j)]}\right) \\
& = \sum\onetoJ \log \left(\frac{\hf_{j1}(\hx_j)}{\hf_{j0}(\hx_j)}\right).
\end{align*}
Observe that $\hQ_J^R$ has the same form as $\hQ_J$, so this result follows from   Theorem~\autoref{thm:nonpar}.
\end{proof}

\subsection{An Auxiliary Lemma} \label{app:thmGen}
Assuming $X$ is Gaussian under $k = 0, 1$, whence the criterion function \eqref{eq:fr_trunc} becomes
\begin{equation} \label{eq:bayesGaussProof}
Q_J^G(x) = \half \sum_{j=1}^J \left[(\log\lambda_{j0} - \log\lambda_{j1}) - \left( \frac{1}{\lambda_{j1}}(x_j - \mu_{j})^2 - \frac{1}{\lambda_{j0}}x_j^2 \right) \right]  > 0.
\end{equation}


Let $\zeta_j = \xi_j / \sqrt{\lambda_{j0}}$. Then 
\begin{gather*}
\zeta_j \distas{\Pi_0} N(0, 1), \quad \zeta_j \distas{\Pi_1} N(m_j, r_j\inv), \text{ and } \\
Q_J^G(X) = \half\sum\onetoJ[\log r_j - r_j(\zeta_j-m_j)^2 + \zeta_j^2],
\end{gather*}
where $\distas{\Pi_k}$ means the distribution under group $k$. 
Under Gaussian assumptions,  our Bayes classifier is a special case of the quadratic discriminant (which is not Bayes in general because it uses two different sets of projections), whose perfect classification properties were discussed in \cite{hall:13:1} in the context of censored functional observations.

For the proof of Theorem 3 we use the following auxiliary result. 

\begin{Lemma} \label{lem:gen}
Assume the predictors come from Gaussian processes. If $\sumjinf m_j^2 < \infty$ and $\sumjinf (r_j-1)^2 < \infty$, then $Q_J^G(X)$ converges almost surely to a random variable as $J\toinf$, in which case perfect classification does not occur. Otherwise perfect classification occurs.
\end{Lemma}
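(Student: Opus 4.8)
The plan is to read $Q_J^G(X)$ as the log-likelihood ratio of the first $J$ standardized scores between the two measures they induce. Under $\Pi_0$ the independent variables $\zeta_j=\xi_j/\sqrt{\lambda_{j0}}$ are $N(0,1)$, and under $\Pi_1$ they are $N(m_j,r_j^{-1})$, so with $\pi_0=\pi_1$ one has $Q_J^G=\log(dP_1^{(J)}/dP_0^{(J)})$, where $P_k^{(J)}$ is the law of $(\zeta_1,\dots,\zeta_J)$ under $\Pi_k$. First I would pass to the infinite product measures $P_0,P_1$ on $\bbR^\infty$ and invoke Kakutani's dichotomy for products of mutually equivalent measures (equivalently the Feldman--H\'ajek theorem in the Gaussian case): $P_0$ and $P_1$ are either mutually equivalent or mutually singular according to whether the product of Hellinger affinities $\prod_j\rho_j$ is positive or zero, where $\rho_j=\int\sqrt{dP_{0,j}\,dP_{1,j}}$.

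The next step is to evaluate the affinity between $N(0,1)$ and $N(m_j,r_j^{-1})$,
\[
\rho_j=\Bigl(\frac{2\sqrt{r_j}}{1+r_j}\Bigr)^{1/2}\exp\Bigl(-\frac{m_j^2 r_j}{4(1+r_j)}\Bigr)\le 1,
\]
so that $\prod_j\rho_j>0$ iff $\sum_j(-\log\rho_j)<\infty$. Writing $-\log\rho_j=A_j+B_j$ with $A_j=\tfrac12\log\{(1+r_j)/(2\sqrt{r_j})\}\ge 0$ and $B_j=m_j^2 r_j/\{4(1+r_j)\}\ge 0$, I would argue that $\sum_j A_j<\infty$ forces $A_j\to 0$, hence $r_j\to 1$, on which range $A_j\asymp(r_j-1)^2$; thus $\sum_j A_j<\infty\iff\sum_j(r_j-1)^2<\infty$. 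Once $r_j\to 1$ the factor $r_j/(1+r_j)\to\tfrac12$ is bounded, so $\sum_j B_j<\infty\iff\sum_j m_j^2<\infty$. Consequently $\prod_j\rho_j>0$ holds exactly when $\sum_j m_j^2<\infty$ and $\sum_j(r_j-1)^2<\infty$, matching the hypotheses of the lemma.

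Finally I would translate the dichotomy into the two stated conclusions. In the equivalent case, $e^{Q_J^G}$ is a nonnegative martingale under $\Pi_0$ with $E_{\Pi_0}(e^{Q_J^G})=1$, so it converges $P_0$-a.s.\ to the finite, positive density $dP_1/dP_0$; hence $Q_J^G$ converges a.s.\ to a finite limit $Q_\infty$ (under both populations, and so under the mixture). Since $\int e^{Q_\infty}\,dP_0=1$ rules out $Q_\infty<0$ a.s., we get $P_{\Pi_0}(Q_\infty\ge 0)>0$, and bounded convergence of $\one\{Q_J^G\ge 0\}$ shows the misclassification rate tends to a strictly positive limit, so perfect classification fails. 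In the singular case the same martingale tends to $0$ $P_0$-a.s.\ while its reciprocal tends to $0$ $P_1$-a.s., giving $Q_J^G\to-\infty$ $P_0$-a.s.\ and $Q_J^G\to+\infty$ $P_1$-a.s.; then $\one\{Q_J^G\ge 0\}\to 0$ under $\Pi_0$ and $\to 1$ under $\Pi_1$, and bounded convergence yields misclassification rate $\tozero$, i.e.\ perfect classification.

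The main obstacle is the middle step: verifying that positivity of $\prod_j\rho_j$ is equivalent to the two summability conditions across all regimes, in particular when $r_j$ does not approach $1$, where the Taylor comparison $A_j\asymp(r_j-1)^2$ is unavailable and one must instead note that $A_j\not\to 0$ places the problem in the singular case, consistently with $\sum_j(r_j-1)^2=\infty$. A self-contained alternative avoids Kakutani by computing $E_{\Pi_0}(T_j)=-\tfrac12\{g(r_j)+r_j m_j^2\}$ and $E_{\Pi_1}(T_j)=\tfrac12\{g(r_j^{-1})+m_j^2\}$ with $g(s)=s-1-\log s$ for the summands $T_j$, and applying Kolmogorov's three-series theorem in the convergent case; but upgrading the divergence of these means to almost sure divergence uniformly requires the likelihood-ratio martingale anyway, since a plain Chebyshev bound can be too weak when the $r_j$ carry sparse large values.
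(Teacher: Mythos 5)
Your proof is correct in its essentials, and it takes a genuinely different route from the paper's. The paper argues by elementary case analysis: when $\sum_{j \ge 1}(r_j-1)^2=\infty$ it distinguishes whether some subsequence $r_{j_l}$ tends to $0$ or $\infty$ (there it bounds the misclassification rate of $\one\{Q_J^G\ge 0\}$ by that of the one-dimensional classifier $\one\{S_{j_l}^G\ge 0\}$, using optimality of the Bayes rule) or whether the $r_j$ stay in a compact subinterval of $(0,\infty)$ (there it computes means and variances of the independent summands and applies Chebyshev's inequality to the partial sums); in the convergent case it cites Kolmogorov's theorem for sums of independent random variables. Your argument replaces all of this with Kakutani's dichotomy: the Hellinger-affinity computation (which is correct, including the reduction of $\prod_j \rho_j>0$ to the two summability conditions, and your handling of the regime where $r_j$ does not approach $1$) identifies the hypotheses $\sum_j m_j^2<\infty$, $\sum_j (r_j-1)^2<\infty$ exactly with equivalence of the two product laws, and the likelihood-ratio martingale $e^{Q_J^G}$ then yields a.s.\ convergence in the equivalent case and a.s.\ divergence $Q_J^G\to-\infty$ under $\Pi_0$, $Q_J^G\to+\infty$ under $\Pi_1$ in the singular case. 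This is tidier and in one respect sharper: it gives a.s.\ divergence rather than a Chebyshev bound, and it actually proves something the paper's Case 4 asserts without argument, namely that a.s.\ convergence of $Q_J^G$ precludes perfect classification (convergence alone does not imply this --- one must rule out that the limit is negative a.s.\ under $\Pi_0$ and positive a.s.\ under $\Pi_1$, which is precisely what uniform integrability of the likelihood ratio delivers). What the paper's more pedestrian technique buys is reusability: the Chebyshev-plus-Bayes-optimality argument transfers verbatim to the non-Gaussian Theorem 3, where the score densities are arbitrary subject only to the moment conditions (C1)--(C2) and no closed-form affinity or Gaussian dichotomy is available; the paper explicitly flags this reuse.

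One step needs a small patch. You conclude from $\int e^{Q_\infty}\,dP_0=1$ that $P_{\Pi_0}(Q_\infty\ge 0)>0$ and then invoke bounded convergence of $\one\{Q_J^G\ge 0\}$; but on the event $\{Q_\infty=0\}$ these indicators need not converge, so the boundary case must be addressed. A clean fix: $\int e^{Q_\infty}\,dP_0=1$ together with $e^{Q_\infty}\le 1$ on $\{Q_\infty\le 0\}$ forces either $P_{\Pi_0}(Q_\infty>0)>0$ or $Q_\infty=0$ a.s.\ under $\Pi_0$. In the first case $\{Q_\infty>0\}\subseteq\liminf_J\{Q_J^G\ge 0\}$, so Fatou gives $\liminf_J P_{\Pi_0}(Q_J^G\ge 0)\ge P_{\Pi_0}(Q_\infty>0)>0$, and perfect classification fails without any appeal to convergence of the indicators. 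In the second case $dP_1/dP_0=1$ a.s., i.e.\ $P_1=P_0$, which means $(m_j,r_j)=(0,1)$ for all $j$; then $Q_J^G\equiv 0$ and the misclassification rate equals $1/2$ for every $J$. With this two-line repair your argument is complete.
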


 This lemma is similar to Theorem~1 of \cite{hall:13:1}, but uses more transparent conditions and a proof technique based on the optimality property of Bayes classifiers which will be reused in the proof of Theorem~\autoref{thm:nonparTrue}.
Lemma~\autoref{lem:gen} states perfect classification occurs if and only if there are sufficient differences between the two groups in the mean or covariance functions in the directions of tail eigenfunctions. This perfect classification phenomenon occurs in non-degenerate infinite dimensional case because we effectively have infinitely many projection scores $\xi_j$ for classification.

\begin{proof}
Case 1: Assume $\sumjinf(r_j-1)^2 = \infty$ and that there exists a subsequence $r_{j_l}$ of $r_j$ that goes to $\infty$ (resp. 0) as $l\toinf$. Take a subsequence $r_{j_l} \toinf, r_{j_l} > 1$ (resp. $r_{j_l} \tozero, r_{j_l} < 1$) for all $l \ge 1$. Denoting the summand $(\log\lambda_{j0} - \log\lambda_{j1}) - [ (\xi_j - \mu_{j1})^2/\lambda_{j1} - \xi_j^2/\lambda_{j0} ]$ of \eqref{eq:bayesGaussProof} as $S_j^G$, for any $j \le J$ the misclassification rate $P(\one\{Q_J^G(X) \ge 0\} \ne Y)$ is smaller than or equal to $P(\one\left\{ S_{j}^G\ge 0\right\} \ne Y)$,  since the former is the Bayes classifier using the first $J$ projections, which minimizes the misclassification error among the class. Thus the misclassification rate of $Q_J^G(X)$ is bounded above by that of the classifier $\one\{\log r_{j} - r_j(\zeta_j - m_j)^2 + \zeta_j^2 \ge 0 \}$ for any $j \le J$. Let $P_{\Pi_k}$ denote the conditional probability measure under group $k$. If there exists $r_{j_l} \tozero$, 
\begin{align*}
P_{\Pi_0}(\log r_{j_l} - r_{j_l}(\zeta_{j_l} - m_{j_l})^2 + \zeta_{j_l}^2 \ge 0) \le P_{\Pi_0}(\log r_{j_l} + \zeta_{j_l}^2 \ge 0) \tozero,
\end{align*}
observing $\zeta_{j_l}^2 \distas{\Pi_0} \chi_1^2$ and $\log r_{j_l} \rightarrow {-\infty}$. 

If there exist $r_{j_l} \toinf$,  then there exists a sequence $M\toinf$ such that $(\log r_{j_l} + M) / r_{j_l} \tozero$. For any $j \in \bbN$,
\begin{align}
P_{\Pi_0}(\log r_j - r_j(\zeta_j-m_j)^2 + \zeta_j^2 \ge 0) & \le P_{\Pi_0}(\log r_j - r_j(\zeta_j - m_j)^2 + M \ge 0)  + P(\zeta_j^2 > M)\nonumber\\
& = P_{\Pi_0}( (\zeta_j-m_j)^2 \le \frac{\log r_j + M}{r_j}) + o(1) \nonumber\\
& = P_{\Pi_0}( |\zeta_j-m_j| \le \sqrt{\frac{\log r_j + M}{r_j}}) + o(1) \label{eq:c1_2} 
\end{align}

Plugging the sequence $r_{j_l}$ for $r_j$ into \eqref{eq:c1_2} we have $\sqrt{(\log r_j + M)/r_j} \tozero$ as $l\toinf$ and $M\toinf$. Since by (C1) the densities of $\zeta_j$ are uniformly bounded, \eqref{eq:c1_2} goes to zero and we have $P_{\Pi_0}(\log r_{j_l} - r_{j_l}(\zeta_{j_l}-m_{j_l})^2 + \zeta_{j_l}^2 \ge 0) \tozero$ as $l\toinf$ and $M\toinf$. Using similar arguments  we can also prove $P_{\Pi_1}(\log r_{j_l} - r_{j_l}(\zeta_{j_l}-m_{j_l})^2 + \zeta_{j_l}^2 < 0) \tozero$ as $l\toinf$. By Bayes theorem $P(\one\{S_{j_l}^G\ge 0 \} \ne Y) = P(Y=0)P(S_{j_l}^G\ge 0|Y=0) + P(Y=1)P(S_{j_l}^G < 0|Y=1) \tozero$ as $l\toinf$. 
Therefore 
\begin{align*}
P(\one\{Q_J^G(X) \ge 0\} \ne Y) \le P(\one\{S_{j_l}^G\ge 0 \} \ne Y) \tozero \text{ as } J \toinf,
\end{align*}
which means perfect classification occurs.

Case 2: Assume $\sumjinf(r_j-1)^2 = \infty$, and there exists $\lbarM$ and $\barM$ such that $0 < \lbarM \le r_j \le \barM < \infty $ for all $j\ge 1$. Letting $E_{\Pi_k}$ and $\Var_{\Pi_k}$ be the conditional expectation and variance under group $k$, respectively, we have
\begin{gather*}
E_{\Pi_0}[\log r_j - r_j(\zeta_j-m_j)^2 + \zeta_j^2 ] = \log r_j - (r_j - 1) - m_j^2r_j \\
E_{\Pi_1}[\log r_j - r_j(\zeta_j-m_j)^2 + \zeta_j^2] = -\log r_j\inv + (r_j\inv - 1 ) + m_j^2 \\
\Var_{\Pi_0}[\log r_j - r_j(\zeta_j-m_j)^2 + \zeta_j^2] = 2(1-r_j)^2 + 4m_j^2 r_j^2 \\
\Var_{\Pi_1}[\log r_j - r_j(\zeta_j-m_j)^2 + \zeta_j^2] = 2(r_j\inv-1)^2 + 4m_j^2r_j\inv.
\end{gather*}
Then 
\begin{align}
P_{\Pi_0}(\sum\onetoJ [\log r_j - r_j(\zeta_j-m_j)^2 + \zeta_j^2] \ge 0 ) & \le \frac{\sum\onetoJ [2(1-r_j)^2 + 4m_j^2 r_j^2] }{[-\sum\onetoJ (r_j - 1 -\log r_j + m_j^2r_j)]^2} \label{eq:m_j}\\
& \le \frac{\sum\onetoJ [2(1-r_j)^2 + 4\barM^2m_j^2] }{[\sum\onetoJ (\frac{1}{\barM} (r_j-1)^2 + \lbarM m_j^2)]^2} \nonumber\\
& = \frac{4\barM^2/\lbarM}{\sum\onetoJ [\frac{1}{\barM}(r_j-1)^2 + \lbarM m_j^2]} \cdot \frac{\sum\onetoJ [2(1-r_j)^2 + 4\barM^2m_j^2]}{\sum\onetoJ [4\frac{\barM}{\lbarM}(r_j-1)^2 + 4\barM^2m_j^2]} \nonumber\\
& \le \frac{4\barM^2/\lbarM}{\sum\onetoJ [\frac{1}{\barM}(r_j-1)^2 + \lbarM m_j^2]} \tozero \text{ as } J\tozero, \label{eq:chebyshev_gen}
\end{align}
where Chebyshev's inequality is used for the first inequality, and Taylor expansion in the second inequality. Analogously the misclassification rate under $\Pi_1$ also can be proven to go to zero. 

Case 3: Assume $\sumjinf (r_j-1)^2 < \infty$ and $\sumjinf m_j^2 = \infty$. The proof is essentially the same as in Case 2.

Case 4: Assume $\sumjinf(r_j-1)^2 < \infty$ and $\sumjinf m_j^2 < \infty$. Then the mean and variance of $Q_J^G(X)$ converges, so $Q_J^G(X)$ converges to a random variable under either population by \cite{bill:95}. Therefore misclassification does not occur.
\end{proof}


\subsection{Proof of Theorem~\ref{thm:nonparTrue}} \label{app:nonpar}
\begin{proof}

Case 1: Assume $\sumjinf(r_j-1)^2 = \infty$ and there exists a subsequence $r_{j_l}$ of $r_j$ that goes to 0 or $\infty$ as $l\toinf$. By the optimality of Bayes classifiers, the Bayes classifier $\one\{Q_J(X) \ge 0 \}$ using the first $J$ components has smaller misclassification error than that of $\one\{S_{j} \ge 0 \}$, where $S_j$ is the $j$th component in the summand of \eqref{eq:fr_trunc}, for all $j \le J$. Since $\one\{S_{j} \ge 0 \}$ is the Bayes classifier using only the $j$th projection, it has a smaller misclassification error than the non-Bayes classifier $\one\{S_{j}^G \ge 0 \}$, where $S_j^G = \log r_j - r_j(\zeta_j - m_j)^2 + \zeta_j^2$ is the $j$th summand in \eqref{eq:bayesGaussProof}. With assumption (C1)-(C2), we prove the misclassification error goes to zeros by going through the same argument as in Lemma~\ref{lem:gen} Case 1.

Case 2: Assume $\sumjinf(r_j-1)^2 = \infty$, and there exists $\lbarM$ and $\barM$ such that $0 < \lbarM \le r_j \le \barM < \infty $ for all $j\ge 1$. By some algebra,
\begin{gather*}
E_{\Pi_0}[\log r_j - r_j(\zeta_j-m_j)^2 + \zeta_j^2 ] = \log r_j - (r_j - 1) - m_j^2r_j, \\
E_{\Pi_1}[\log r_j - r_j(\zeta_j-m_j)^2 + \zeta_j^2] = -\log r_j\inv + (r_j\inv - 1 ) + m_j^2, \\
\Var_{\Pi_0}[\log r_j - r_j(\zeta_j-m_j)^2 + \zeta_j^2] \le (2C_M - 1)(1-r_j)^2 + 4 (C_M + 1) m_j^2 r_j^2, \text{ and} \\
\Var_{\Pi_1}[\log r_j - r_j(\zeta_j-m_j)^2 + \zeta_j^2] \le (2C_M - 1)(r_j\inv-1)^2 + 4(C_M + 1)m_j^2r_j\inv.
\end{gather*}

The expectations are the same as in the Gaussian case because the first two moments of $\zeta_j$ does not depend on distributional assumptions. The inequalities in the variance calculation are due to $2ab \le a^2 + b^2 $ for all $a,b \in \bbR$. The same Chebyshev's inequality argument goes through as in Theorem~\ref{thm:nonpar}. 

Case 3: Assume $\sumjinf (r_j-1)^2 < \infty$ and $\sumjinf m_j^2 = \infty$. The proof is essentially the same as in Case 2.
\end{proof}

\subsection{Proof of Theorem~\ref{thm:presmooth}} \label{app:presmooth}

The proof requires the following key lemma, which is an extension of Lemma~\ref{lem:hff}, changing the rate from $h + (\frac{nh}{\log n})^{-\half}$ to $h + (\frac{nh}{\log n})^{-\half} + (m^{\frac{2}{5}}h^2)\inv$. The remainder of the proof is omitted, since it is analogous to that of Theorem~\autoref{thm:nonpar}.

\begin{Lemma} \label{lem:tff}
Assuming (B1)--(B4) and (D1)--(D4), for any $j \ge 1$, $k = 0, 1$,
\begin{equation} \label{eq:tff}
\sup_{x \in \cS(c)} |\tf_{jk}(\tx_j) - f_{jk}(x_j)| = O_P(h + (\frac{nh}{\log n})^{-\half} + (m^{\frac{2}{5}}h^2)\inv).
\end{equation}\end{Lemma}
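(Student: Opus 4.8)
The plan is to piggyback on Lemma~\ref{lem:hff} and isolate only the additional error that pre-smoothing contributes. Using the triangle inequality
$$|\tf_{jk}(\tx_j) - f_{jk}(x_j)| \le |\tf_{jk}(\tx_j) - \hf_{jk}(\hx_j)| + |\hf_{jk}(\hx_j) - f_{jk}(x_j)|,$$
the second summand is already $O_P(h + (nh/\log n)^{-1/2})$ uniformly over $\cS(c)$ by Lemma~\ref{lem:hff}. Hence it suffices to show $\sup_{x\in\cS(c)}|\tf_{jk}(\tx_j) - \hf_{jk}(\hx_j)| = O_P((m^{2/5}h^2)\inv)$, i.e.\ that the only new contribution stems from replacing the fully observed scores $\hxi_{ijk},\,\hx_j$ by their pre-smoothed counterparts $\txi_{ijk},\,\tx_j$.

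First I would control the score perturbations. The difference $\txi_{ijk} - \hxi_{ijk} = \int_\cT(\tXik - \Xik)\tphi_j\,\d t + \int_\cT \Xik(\tphi_j - \hphi_j)\,\d t$ splits into a trajectory error and an eigenfunction error. For the trajectory term, standard local linear smoothing theory under (D1)--(D4) gives a bias of order $w^2$ and a stochastic part of order $(mw)^{-1/2}$; with $w \asymp m^{-1/5}$ both are of order $m^{-2/5}$, so $\sup_i\|\tXik - \Xik\| = O_P(m^{-2/5})$ uniformly over the $n$ curves thanks to the uniform design and bandwidth conditions (D2)--(D3) (this is precisely the rate obtained in \cite{kong:16}). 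For the eigenfunction term, the pre-smoothing error propagates into the pooled covariance as $\|\tG - \hG\| = O_P(m^{-2/5})$ in Hilbert--Schmidt norm, and the eigengap condition (B3) together with standard operator perturbation bounds \citep{bosq:00} yields $\|\tphi_j - \hphi_j\| = O_P(m^{-2/5})$ for each fixed $j$. Combining these, $\max_i|\txi_{ijk} - \hxi_{ijk}| = O_P(m^{-2/5})$, and since $\|x\|\le c$ on $\cS(c)$ we also get $\sup_{x\in\cS(c)}|\tx_j - \hx_j| \le c\,\|\tphi_j - \hphi_j\| = O_P(m^{-2/5})$.

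Next I would propagate this score error through the kernel density estimate. Splitting $\tf_{jk}(\tx_j) - \hf_{jk}(\hx_j) = [\tf_{jk}(\tx_j) - \tf_{jk}(\hx_j)] + [\tf_{jk}(\hx_j) - \hf_{jk}(\hx_j)]$ and applying a first-order Taylor expansion of the twice-differentiable kernel $K$ (assumption (B4)), each bracket is bounded by $\|K'\|_\infty h_{jk}^{-2}$ times a score difference of order $m^{-2/5}$. Since $h_{jk} = h\sqrt{\lambda_{jk}} \asymp h$ for fixed $j$, this gives $\sup_{x\in\cS(c)}|\tf_{jk}(\tx_j) - \hf_{jk}(\hx_j)| = O_P(h^{-2} m^{-2/5}) = O_P((m^{2/5}h^2)\inv)$, exactly the extra term claimed. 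I note that the condition $m h^5 \toinf$ in (D4) is equivalent to $m^{2/5}h^2 \toinf$, so this contribution is $o(1)$, which is what lets Theorem~\ref{thm:presmooth} inherit the asymptotic equivalence of Theorem~\ref{thm:nonpar}.

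The main obstacle will be the uniform-in-$i$ control of the pre-smoothing trajectory error at the sharp rate $m^{-2/5}$: one must upgrade the pointwise/$L^2$ local linear rate to a bound holding simultaneously across all $n$ curves via a maximal inequality, which is where the uniform smoothness and design conditions (D1)--(D3) and the H\"older condition (B1) do the essential work. The delicate accounting is then to verify that the $h^{-2}$ amplification from differentiating $K$ does not swamp the $m^{-2/5}$ gain, a balance secured precisely by (D4).
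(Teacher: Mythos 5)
Your overall strategy is the same as the paper's: a triangle inequality that splits off the fully-observed error (handled by Lemma~\ref{lem:hff}), then a mean-value/Taylor bound on $K$ costing a factor $h^{-2}$ that converts the pre-smoothing perturbation into score differences, which you decompose into a trajectory error $\|\tXik - \Xik\|$ and an eigenfunction error $\|\tphi_j - \hphi_j\|$, the latter controlled through the pooled-covariance perturbation $\|\tG - \hG\|_F = O_P(m^{-2/5})$ and the operator perturbation bound of Bosq. The rates, the role of $w \asymp m^{-1/5}$, and the observation that (D4) keeps the $h^{-2}$ amplification harmless all agree with the paper.

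There is, however, one genuine gap: you assert $\sup_i \|\tXik - \Xik\| = O_P(m^{-2/5})$ uniformly over the $n$ curves and attribute this to \cite{kong:16}. What that reference (and the paper) actually provides is the per-curve expectation bound $E\|\tXik - \Xik\| = O((mw)^{-1/2} + w^2) = O(m^{-2/5})$; upgrading this to a maximum over $n$ curves requires a maximal inequality and would in general incur extra factors (a $\sqrt{\log n}$ or worse, depending on the tails of the measurement errors $\varepsilon_{ikl}$). You flag this yourself as the ``main obstacle'' but never resolve it. The key point you miss is that the maximum is never needed: the kernel density estimator is itself an average over $i$, so after the mean-value step the bound involves only
\begin{equation*}
\oneovernk \sumink |\txi_{ijk} - \hxi_{ijk}| \;\le\; \oneovernk \sumink \Bigl[ \|\tXik - \Xik\| + \bigl(\|\Xik\| + c\bigr)\,\|\tphi_j - \hphi_j\| \Bigr],
\end{equation*}
and since the pairs $(\tXik, \Xik)$ are identically distributed, Markov's inequality applied to the expectation bound gives $\oneovernk \sumink \|\tXik - \Xik\| = O_P(m^{-2/5})$ directly, while $\oneovernk \sumink \|\Xik\| + c = O_P(1)$ by (B1). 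This is exactly how the paper proceeds, and it makes your ``obstacle'' disappear; with this substitution (average in place of supremum) your argument is complete and correct.
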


\begin{proof}
Given $x \in \cS(c)$, by triangle inequality
\[
|\tf_{jk}(\tx_j) - f_{jk}(x_j)| \le |\tf_{jk}(\tx_j) - \hf_{jk}(\hx_j)| + |\hf_{jk}(\hx_j) - f_{jk}(x_j)|.
\]

The rate for the second term can be derived from Lemma~\ref{lem:hff}, so we focus only on the first term. Note that for fixed $j,k$ and $h_{jk} = h\sqrt{\lambda_{jk}}$,
\begin{align}
|\tf_{jk}(\tx_j) - \hf_{jk}(\hx_j)| & = \frac{1}{n_k h_{jk}}\left| \sumink K\left(\frac{\int_\cT (\tXik(t) - x(t)) \tphi_j(t) \d t}{h_{jk}}\right) - K\left(\frac{\int_\cT (\Xik(t) - x(t)) \hphi_j(t) \d t}{h_{jk}}\right)\right| \nonumber \\
& \le \frac{1}{n_k h_{jk}^2} \sumink \left| \int_\cT (\tXik(t) - x(t)) \tphi_j(t) - (\Xik(t) - x(t)) \hphi_j(t) \d t \right| \cdot |K'(\eta_{4jk})| \nonumber \\
& \le \frac{c_3}{n_k h^2} \sumink \left| \int_\cT (\tXik(t) - x(t)) \tphi_j(t) - (\Xik(t) - x(t)) \hphi_j(t) \d t \right|, \label{eq:thdiff}
\end{align}
for a constant $c_3 > 0$, where the first inequality is by Taylor's theorem, $\eta_{4jk}$ is a mean value, and the last inequality is by (B4). The summand in \eqref{eq:thdiff} is 
\begin{align*}
& \left| \int_\cT (\tXik(t) - x(t)) \tphi_j(t) - (\Xik(t) - x(t)) \hphi_j(t) \d t \right| \\ 
= & \left| \int_\cT (\tXik(t) - \Xik(t)) \tphi_j(t) + (\Xik(t) - x(t))(\tphi_j(t) - \hphi_j(t)) \d t \right| \\
\le & \left| \int_\cT (\tXik(t) - \Xik(t)) \tphi_j(t)  \d t\right| + \left| \int_\cT (\Xik(t) - x(t))(\tphi_j(t) - \hphi_j(t)) \d t \right| \\
& \le ||\tXik - \Xik|| \cdot ||\tphi_j|| + ||\Xik - x|| \cdot || \tphi_j - \hphi_j|| \\
& \le ||\tXik - \Xik|| + (||\Xik|| + c) ||\tphi_j - \hphi_j||,
\end{align*}
where the second and third inequalities follow from Cauchy-Schwarz inequality and from $||x|| \le c$, respectively. Plugging the previous result into \eqref{eq:thdiff},
\begin{equation} \label{eq:thdiff2}
|\tf_{jk}(\tx_j) - \hf_{jk}(\hx_j)| \le \frac{c_3}{h^2} \left[ \oneovernk\sumink||\tXik - \Xik|| + ||\tphi_j - \hphi_j|| (\oneovernk\sumink ||\Xik|| + c) \right].
\end{equation}

Since $(\tXik, \Xik)$ are identically distributed for $i=1, \dots, n_k$, and that $\int_\cT E\{(\Xktwo(t)^2 \} \d t < \infty$ by (D1), the first term in the brackets has expected value equal to 
\begin{equation} 
E||\tXik - \Xik|| = E_{\Xik}[ E_{\varepsilon_i} ||\tXik - \Xik|| \;|\Xik ] = O((mw)^{-\half} + w^2) = O(m^{-2/5}), \nonumber
\end{equation}
where more details about the second equality in the last display can be found in the supplement of \cite{kong:16}. Also $E(\oneovernk\sumink ||\Xik|| + c) = O(1)$ by (B1). So 
\begin{equation} \label{eq:rateX}
\oneovernk\sumink||\tXik - \Xik||=O_p(m^{-2/5}) \quad \text{ and } \quad \oneovernk\sumink ||\Xik|| + c = O_p(1). 
\end{equation}

It remains to be shown $||\tphi_j - \hphi_j|| = O_p(m^{-2/5})$. 
Let $\tDelta_k = \tG_k - \hG_k$ and for a square-integrable function $A(s, t)$ denote $||A||_F = (\int_\cT \int_\cT A(s, t)^2 \d s \d t)^\half$ be the Frobenius norm.
\cite{kong:16} also shows in the supplement that $||\tDelta_k||_F = O_p(m^{-2/5})$,
%
so $||\tDelta||_F = ||\tDelta_0 + \tDelta_1||_F / 2 = O_p(m^{-2/5})$. By standard perturbation theory for operators (see for example \cite{bosq:00}), for a fixed $j \ge 1$ 
\begin{equation} \label{eq:ratephi}
||\tphi_j - \hphi_j|| = O(||\tDelta||_F / \sup_{k\ne j} |\hlambda_j - \hlambda_k|) = O_p(m^{-2/5}).
\end{equation}
Plugging \eqref{eq:rateX} and \eqref{eq:ratephi} into \eqref{eq:thdiff2} 
we have the conclusion.
\end{proof}

\section*{References}
\addcontentsline{toc}{section}{References}

\end{document}